\newtheorem{thm}{Theorem}[section]
\newtheorem{theorem}{Theorem}[section]
\newtheorem{lemma}[thm]{Lemma}
\newtheorem{proposition}[thm]{Proposition}
\newtheorem{example}{Example}[section]
\theoremstyle{definition}
\theoremstyle{remark}
\newtheorem{remark}[thm]{Remark}
\numberwithin{equation}{section}
 \def\bn{{\mathbf n}}
\def\bw{{\mathbf w}} \def\bz{{\mathbf z}}
\def\bw{{\mathbf w}}
\def\pd#1#2{\frac{\partial #1}{\partial#2}}
 \def\eps{\epsilon}
\newcommand{\bd}{\partial}
\newcommand{\wtilde}{\widetilde}
\newcommand{\what}{\widehat}
\newcommand{\tnorm}[1]{|\!|\!|#1|\!|\!|}
\newcommand{\bt}{{\mathbf{t}}}
\newcommand{\T}{{\mathcal{T}}}
\begin{document}

\title[ A modified $P_1$ - immersed Finite element method ]
{ A modified $P_1$ - immersed finite element method}

\author[  D. Y. Kwak ]
{ Do Y. Kwak  }
\address{ Department of Mathematical Sciences, KAIST,\\  291 Daehak-ro, Yuseong-gu,
Daejeon, Korea 305-701.    }
\email{kdy@kaist.ac.kr}
\thanks{This author was supported by the National Research Foundation of Korea, No.2014R1A2A1A11053889}
 \author[ Juho Lee ]
{ Juho Lee }
\address{Department of Mathematical Sciences, KAIST, \\  291 Daehak-ro, Yuseong-gu,
Daejeon, Korea 305-701. }
\email{cool295@kaist.ac.kr}
\keywords{modified $P_1$-immersed finite element, flux jump, discontinuous Galerkin, NIPG, SIPG}

\subjclass[2000]{primary 65N30, secondary 74S05, 76S05 }
%\date{10 May}

\begin{abstract}
In recent years, the immersed finite element methods (IFEM) introduced in \cite{Li2003}, \cite{Li2004} to solve elliptic problems having an interface in the domain due to the discontinuity of coefficients are getting more attentions of researchers because of their simplicity and efficiency.
Unlike the conventional finite element methods, the IFEM allows the interface to cut through the interior of the element, yet
after the basis functions are altered so that they satisfy the flux jump conditions, it seems to show  a reasonable order of convergence.

In this paper, we propose an improved version of the $P_1$ based IFEM
 by adding the line integral of flux  terms on each element. This technique resembles the discontinuous Galerkin (DG) method,
 however, our method  has much less degrees of freedom than
 the DG  methods since we use the same number of unknowns as the conventional $P_1$ finite element method.

 We prove $H^1$ and $L^2$  error estimates which are optimal both in order and regularity.
Numerical experiments were carried out for several examples, which show the robustness of our scheme.
\end{abstract}

\maketitle
\section{ Introduction }
In  recent years, there have been some developments of immersed finite element methods  for  elliptic  problems having an interface. These methods use meshes which do not necessarily align with the discontinuities of the coefficients \cite{Li2003}, \cite{Li2004}, thus
   violate a basic principle of triangulations in the conventional finite element methods \cite{Braess}, \cite{Ciarlet}.
However, when  the basis functions are modified so that they satisfy the interface conditions, they seem to work well
 \cite{Ch-Kw-We}, \cite{Li2003}, \cite{Li2004}.
 These methods were extended to the case of Crouzeix-Raviart $P_1$ nonconforming finite element method \cite{Crouzeix}
 by Kwak et al. \cite{Kwak-We-Chang}, and to
  the problems with nonzero jumps in \cite{Chang_Kwak}.
 Some related works on interface problems  can be found in \cite{Bramble}, \cite{Hou_L},  \cite{Hou_W_W}, \cite{Lai_Li_L}, \cite{LI:2006}, \cite{Oeverm_S_K},\cite{Yu}.

On the other hand, the discontinuous Galerkin methods (DG)  where one uses completely discontinuous basis functions
were developed and have been studied extensively, see \cite{Arnold-IP}, \cite{Ar-B-Co-Ma}, \cite{Da-Sun-Wh},
 \cite{Riv-Wh-Gi} and references therein. The DG methods work quite well for problems with discontinuous coefficient
 in the sense that they capture the sharp changes of the solutions well,
 yet they require large number of unknowns and  the meshes have to be aligned with the discontinuity.

 The purpose of this paper is to combine the advantages of the two methods.
We use a DG type idea of adding the consistency terms to the IFEM, thus proposing a modified version of IFEM based on the $P_1$ - Lagrange basis functions  on triangular grids.
 In spirit, it resembles \cite{A.P.Hansbo2002} in the sense that the standard linear basis functions are used
 for noninterface elements and  line integrals are added,
but in our method the line integrals along the edges, not along the interface, are added. Furthermore,
our method incorporate the flux jump conditions to the basis functions hence requires no extra unknowns along the interface as in \cite{A.P.Hansbo2002}.
%See also \cite{P.Hansbo_Lar2002}, where the DG stability term is combined with CR elt. for elasticity eq.

 We prove error estimates in the mesh dependent $H^1$ - norm and $L^2$ - norm
which are optimal both in the order and the regularity. We carry out various numerical tests to  confirm our theory and
compare the performance  with the unmodified scheme.

\section{ Preliminaries }
 Let $\Omega$ be a connected, convex polygonal domain in $\mathbb{R}^2$ which is divided into two subdomains
$\Omega^+$ and $\Omega^-$ by a  $C^2$ interface $\Gamma = \partial \Omega^+ \cap \partial \Omega^-$, see Figure
1.   % $=T,\Omega^+, \Omega^-, \Omega$
We assume that $\beta(x)$ is a positive function bounded below and above by two positive constants. Although our theory
applies to the case of nonconstant  $\beta(x)$,
we assume $\beta(x)$ is piecewise constant for the simplicity of presentation:
there are two positive constants $\beta^+, \beta^-$
such that $\beta(x)= \beta^+ $ on $\Omega^+$ and $\beta(x)=\beta^- $ on $\Omega^-$.
Consider the following elliptic interface problem
\begin{eqnarray}\label{TheprimalEq}
-\nabla\cdot (\beta(x)\nabla  u) &=& f ~~ \mathrm{   in}~  \Omega^s \quad (s=+,-)\\
u &=& 0  ~~ \mathrm{ on}~  \partial\Omega
\end{eqnarray}
with the jump conditions along the interface
\begin{eqnarray} \label{jump_condition}
[u]_\Gamma=0, ~~~ \left[\,\beta(x)\pd  un\,\right]_\Gamma=0,
\end{eqnarray}
where $f\in L^2(\Omega)$ and $u\in H^1_0 (\Omega)$ and the bracket $[\cdot]_\Gamma$ means the jump across the interface:
$$ [u]_\Gamma:= u|_{\Omega^+}-u|_{\Omega^-}. $$
\begin{figure}[t]
\begin{center}
      \psset{unit=2cm}
      \begin{pspicture}(-1,-1)(1,1)
        \pspolygon(0.9,0.9)(-0.9,0.9)(-0.9,-0.9)(0.9,-0.9)
        \psccurve(0.47,0) (0.2,0.22)(-0.6,-0.1)(-0.2,-0.5)(0.5,-0.11)
        \rput(0,0){\scriptsize$\Omega^-$}
        \rput(-0.3,0.5){\scriptsize$\Omega^+$}
        \rput(0.7,0){\scriptsize$\Gamma$}
      \end{pspicture}   \label{fig:doma0}
\caption{A domain $\Omega$ with interface}
\end{center}
\end{figure}
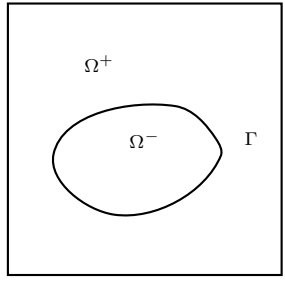

Let $p\geq1$ and  $m\geq 0$ be an integer.
For any domain $D$, we let  $W^m_p(D) $ be the usual  Sobolev space with (semi)-norms  and
 denoted by $|\cdot|_{m,p,D}$ and  $\|\cdot\|_{m,p,D}$.
 %When $p=2$, we write $H^m(D):=W^m_2(D)$ with the (semi)- norms $|\cdot|_{m,D}$ and
%$\|\cdot\|_{m,D}$.

 For $m\ge 1$, let
\begin{eqnarray*}
\wtilde{W}^m_p(D) &:=& \{\,u\in {W}^{m-1}_p(D)  : \,u|_{D\cap \Omega^s}\in
W^m_p(D\cap \Omega^s), s = +,-\,\}, %~~~p\geq1,\,\textrm{ $m\geq 0$ }
\end{eqnarray*}
with norms;
\begin{eqnarray*}
 |u|^p_{\wtilde{W}^m_p(D)} &:=& |u|^p_{m,p,T\cap \Omega^+} + |u|^p_{m,p,D\cap \Omega^-},\\
\|u\|^p_{\wtilde{W}^m_p(D)} &:=&\|u\|^p_{m-1,p,D}+|u|^p_{\wtilde{W}^m_p(D)}.
\end{eqnarray*}
%\begin{eqnarray*}
%\|u\|^2_{m,p,T} &:=& \|p\|^2_{m,p,T^-\cap\Omega^-} +\|p^2_{m,p,T^+\cap\Omega^+} +
%\|p^2_{m,p,T^-_r} +\|p^2_{m,p,T^+_r},\\
%|u|^2_{m,p,T} &:=& |p|^2_{m,p,T^-\cap\Omega^-} + |p|^2_{m,p,T^+\cap\Omega^+} +
%|p|^2_{m,p,T^-_r} + |p|^2_{m,p,T^+_r} .
%\end{eqnarray*}
%Now we  define $\wtilde{W}^m_p(\Omega)$ to be the space of all functions
%$u\in {W}^{m-1}_p(\Omega)$ such that $u|_{\Omega^s}\in \wtilde{W}^m_p(\Omega^s), s=+,-$
%equipped with the broken (semi)-norms
%$|u|_{\wtilde{W}^m_p(\Omega)}:=(\sum_{s=+,-}|u|^p_{\wtilde{W}^m_p(\Omega^s)})^{1/p}$ and   $\|u\|_{\wtilde{W}^m_p(\Omega)}:=(\sum_{s=+,-}\|u\|^p_{\wtilde{W}^m_p(\Omega^s)})^{1/p}$.
When $p=2$, we write $\wtilde{H}^m(D)$  and denote
the (semi)-norms by $|u|_{\wtilde{H}^m (D)}$ and $\|u\|_{\wtilde{H}^m (D)}$.
 $H^1_0(\Omega)$ is the subspace of $H^1(\Omega)$ with zero trace on the boundary.
Also, when some finite element
triangulation $\{\mathcal{T}_h\}$  is involved, the norms are understood as  piecewise norms $ (\sum_{T\in\mathcal{T}_h}|u|^p_{\wtilde{W}^m_p(T)})^{1/p}$ and  $(\sum_{T\in\mathcal{T}_h}\|u\|^p_{\wtilde{W}^m_p(T)})^{1/p}$, etc. If $p=2$, we denote them by  $|u|_{m,h}$ and $\|u\|_{m,h}$.
% When $p=2$, it is written as $\wtilde{H}^m(\Omega)$  and$\|u\|_{m,\Omega}$, resp.
 We also need some subspaces of $\wtilde{H}^2(T)$ and $\wtilde{H}^2(\Omega)$  satisfying the jump conditions:
\begin{eqnarray*}
\wtilde{H}^2_{\Gamma}(T)\hspace{-5pt} &:=& \hspace{-5pt}\{\,u\in H^1(T) : \, u|_{T\cap
\Omega^s}\in H^2(T\cap
\Omega^s),\,s = +,-,~\,\left[\beta \pd un\right]_\Gamma = 0 \text{ on } \Gamma\cap T\, \}\\
\wtilde{H}^2_{\Gamma}(\Omega)\hspace{-5pt} &:=& \hspace{-5pt} \{\,u\in H^1_0(\Omega) :
 \, u|_{T}\in\wtilde{H}^2_{\Gamma}(T),\,\, \forall T\in \mathcal{T}_h \}.
\end{eqnarray*}

 Throughout the paper, the constants $C, C_0, C_1$, etc., are generic constants  independent
of the mesh size $h$ and functions $u, v$ but may depend on the problem data  $\beta, f$ and $\Omega$, and are not necessarily the same
on each occurrence.

The usual  weak formulation for  the problem (\ref{TheprimalEq}) - (\ref{jump_condition}) is: Find $u\in
H^1_0(\Omega)$ such that
\begin{equation} \label{weakop}
\int_\Omega \beta(x) \nabla u\cdot \nabla v dx = \int_\Omega f v dx , ~~~ \forall v
\in H^1_0(\Omega).
\end{equation}

We have the following existence and regularity theorem for this problem; see \cite{Bramble}, \cite{Chen1998}, \cite{RS}.
\begin{theorem} \label{thm:reg}
Assume that $f\in L^2(\Omega)$. Then the variational problem (\ref{weakop}) has a unique
solution $u\in\wtilde{H}^2(\Omega)$ which satisfies
\begin{eqnarray}
\|u\|_{\wtilde{H}^2(\Omega)} \leq C \|f\|_{L^2(\Omega)}.
\end{eqnarray}
\end{theorem}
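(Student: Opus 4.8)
The plan is to split the proof into the elementary well-posedness part, handled by the Lax--Milgram theorem, and the more delicate piecewise $H^2$ regularity, handled by a localization argument. For well-posedness I would work with the bilinear form $a(u,v) := \int_\Omega \beta(x)\nabla u\cdot\nabla v\,dx$ on $H^1_0(\Omega)$. Since $0 < \beta^- \le \beta(x) \le \beta^+$, boundedness $|a(u,v)| \le \beta^+ \|u\|_{H^1(\Omega)}\|v\|_{H^1(\Omega)}$ is immediate, and coercivity $a(u,u) \ge \beta^-|u|_{H^1(\Omega)}^2 \ge C\|u\|_{H^1(\Omega)}^2$ follows from the Poincar\'e inequality on $H^1_0(\Omega)$. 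Lax--Milgram then yields a unique $u\in H^1_0(\Omega)$ solving (\ref{weakop}), together with the a priori bound $\|u\|_{H^1(\Omega)} \le C\|f\|_{L^2(\Omega)}$.

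The substantive part is upgrading this to $u\in\wtilde{H}^2(\Omega)$. I would localize with a smooth partition of unity subordinate to a finite cover of $\overline{\Omega}$ by three types of patches: (i) patches whose closure meets neither $\Gamma$ nor $\partial\Omega$, (ii) patches meeting $\partial\Omega$ but not $\Gamma$, and (iii) patches straddling the interface $\Gamma$. On type (i) patches, interior elliptic regularity for the constant-coefficient operator $-\beta^s\Delta$ gives the local $H^2$ bound. On type (ii) patches, since $\Omega$ is convex polygonal, boundary $H^2$ regularity up to the boundary holds; convexity is precisely what prevents the loss of regularity that a reentrant corner would otherwise cause.

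The heart of the matter --- and the step I expect to be the main obstacle --- is the interface regularity on type (iii) patches, i.e.\ the transmission problem across $\Gamma$. Because $\beta$ jumps across $\Gamma$ one cannot expect $u\in H^2$ globally, only $u|_{\Omega^s}\in H^2$, the two one-sided pieces being coupled by $[u]_\Gamma=0$ and $\left[\beta\pd un\right]_\Gamma=0$. My plan is to exploit the $C^2$ regularity of $\Gamma$ to flatten it locally by a $C^2$ diffeomorphism, turning the curved interface into a hyperplane and the operator into a variable-coefficient but still uniformly elliptic operator with smooth coefficients on each side. In the flattened geometry I would estimate the tangential second derivatives by Nirenberg's difference-quotient method, which is legitimate because translation in the tangential direction respects both the flattened interface and the transmission conditions. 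The remaining pure normal second derivative is then recovered algebraically from the equation itself: solving the PDE for the second normal derivative expresses it in terms of $f$ and the already-controlled tangential derivatives, while the flux condition $\left[\beta\pd un\right]_\Gamma=0$ guarantees that the jump in the normal flux produces no distributional singularity on $\Gamma$. Summing the local estimates over the partition of unity and absorbing the lower-order terms via the $H^1$ bound from the first step yields $\|u\|_{\wtilde{H}^2(\Omega)} \le C\|f\|_{L^2(\Omega)}$, as claimed.
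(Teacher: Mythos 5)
The paper does not actually prove Theorem \ref{thm:reg}; it states the result and refers the reader to the literature (Bramble--King, Chen--Zou, and Roitberg--Seftel), so there is no in-paper argument to compare against line by line. Your outline is, in substance, the standard proof found in those references: Lax--Milgram on $H^1_0(\Omega)$ for well-posedness, then a partition of unity separating interior, boundary, and interface patches, with interior regularity, convex-polygon boundary regularity, and a flatten-and-difference-quotient argument for the transmission problem across $\Gamma$. The strategy is sound, and the key observations are in the right places: the closed $C^2$ interface stays away from $\partial\Omega$ so the three patch types do not interact; convexity of the polygon is exactly what saves boundary $H^2$ regularity; tangential difference quotients are admissible because tangential translation preserves the flattened interface (more precisely, because $v=-D^{\tau}_{-h}(\zeta^2 D^{\tau}_h u)$ remains a legitimate test function in the weak formulation, which already encodes both transmission conditions); and the pure normal second derivative is recovered from the strong equation \emph{separately on each side}, which is why only piecewise $H^2$, not global $H^2$, results. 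Two small caveats: the flattening map being $C^2$ makes the transformed coefficients only $C^1$ on each side, which is still enough for this argument but should be said; and your phrase about the flux condition preventing a ``distributional singularity'' is not where that condition actually enters --- it is already built into the weak formulation and is what makes the localized weak problem a genuine transmission problem rather than two decoupled ones. At the level of detail given this is a correct sketch rather than a complete proof, but it is the argument the cited references carry out.
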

\section{ $P_1$-immersed finite element methods}
We briefly review  the immersed finite element space based on the $P_1$ - Lagrange basis functions (\cite{Li2003}, \cite{Li2004}).  Let $\{\mathcal{T}_h\}$ be the usual quasi-uniform
triangulations of the domain $\Omega$ by the triangles of maximum diameter $h$ which may not be aligned with the interface $\Gamma$.
 We call an element
$T\in\mathcal{T}_h$ an {\it interface element} if the interface $\Gamma$
passes through the interior of $T$, otherwise we call it a
{\it noninterface} element.  Let $\mathcal{T}^I_h$ be the collection of all interface elements. We assume that the interface meets
the edges of an interface element at no more than two points.

We  construct the local basis functions on each
element $T$ of the partition $\mathcal{T}_h$. For a noninterface
element $T\in\mathcal{T}_h$, we simply use the standard linear
shape functions on $T$ whose degrees of freedom are functional
values on the vertices of $T$, and use $\overline{S}_h(T)$ to
denote the linear spaces spanned by the three nodal basis
functions on $T$:
$$ \overline{S}_h(T) = \text{span}\{\,\phi_i\,:\,\phi_i \text{ is the standard
  linear shape function}\,\}$$
We let $\overline{S}_h(\Omega)$ denote the space of usual continuous, piecewise linear polynomials with vanishing
boundary values.

Now we consider a typical interface element $T\in \mathcal{T}_h^I$ whose geometric configuration
is given  as in Fig. \ref{fig:interel}. Here  the curve between the two points $D$ and $E$ is a part of the interface and
 $\overline{DE}$ is the  line segment  connecting the intersections of the interface and the edges.
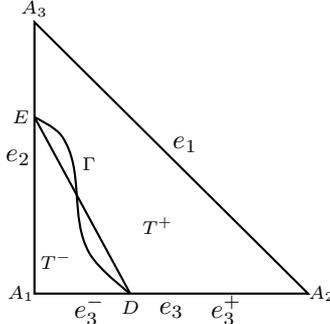
\begin{figure}[ht]
  \begin{center}
    \psset{unit=3.6cm}
    \begin{pspicture}(0,0)(1,1)
      \psset{linecolor=black} \pspolygon(0,0)(1,0)(0,1) \psline(0,0.65)(0.35,0)
      \pscurve(0,0.65)(0.1,0.58)(0.2,0.15)(0.35,0)
%      \psdot(0.5,0)
      \rput(0,1.05){\scriptsize$A_3$}
      \rput(-0.05,0){\scriptsize$A_1$}
      \rput(1.05,0){\scriptsize$A_2$}
      \rput(0.5,-0.06){$e_3$} \rput(0.2,-0.06){$e_3^-$} \rput(0.7,-0.06){$e_3^+$}
%      \psdot(0.5,0.5)
      \rput(0.55,0.55){$e_1$}
%      \psdot(0,0.5)
      \rput(-0.06,0.5){$e_2$}
      \rput(-0.05,0.65){\scriptsize$E$}
      \rput(0.08,0.12){\scriptsize$T^-$}
      \rput(0.45,0.25){\scriptsize$T^+$}
      \rput(0.35,-0.05){\scriptsize$D$}
      \rput(0.20,0.48){\scriptsize$\Gamma$}
%  \rput(-.35,0.65){\scriptsize$T_r^+$}
\pnode(-.3,0.6){a}
\pnode(0.12,0.5){b}
%\ncarc[linewidth=0.5\pslinewidth]{->}{a}{b}
%
%\pnode(-.3,.3){a}
%\pnode(0.22,0.21){b}
%\ncarc[linewidth=0.5\pslinewidth]{->}{a}{b}
%\rput(-.35,0.23){\scriptsize$T_r^-$}
    \end{pspicture}
    \caption{A typical interface triangle} \label{fig:interel}
\end{center}
\end{figure}

We construct  piecewise linear basis functions $\hat\phi_i, i=1,2,3$ of the form
\begin{eqnarray}
 &&\hat\phi_i(X) = \left\{%
\begin{array}{ll}
a^+ + b^+ x + c^+ y, & \textrm{ $X= (x,y)\in T^+$,}\\
a^- + b^- x + c^- y, & \textrm{ $X=(x,y)\in T^-$,} \\
\end{array}%
\right. \label{def:basis-1}
\end{eqnarray}  satisfying
\begin{eqnarray}
\, \hat\phi_i(A_j)& = &\delta_{ij},~~ j=1,2,3, \label{def:basis-2}\\
\,[\hat\phi_i(D)]& = &[\hat\phi_i (E)] =0, \\
\left[\beta \frac{\bd\hat\phi_i}{\bd \mathbf{n}} \right]_{\overline{\textrm{\tiny{$DE$}}}}&=&0. \label{def:basis-3}
\end{eqnarray}
These are  continuous, piecewise linear functions on $T$
satisfying  the flux jump condition along $\overline{DE}$, whose uniqueness and existence are known \cite{Ch-Kw-We}, \cite{Li2003}.
 \begin{remark} \label{rmk:derivative}
Since $\hat\phi_i$ is continuous, piecewise linear, it is clear that the tangential derivative  along  $\overline{DE}$  is continuous, i.e.,
\begin{equation*}
%\beta^+\pd{\hat\phi_i^+}{\bn_{\overline{DE}}} = \beta^-\pd{\hat\phi_i^-}{\bn_{\overline{DE}}}, ~ ~ ~ ~
\pd{\hat\phi_i^+}{\bt_{\overline{DE}}} = \pd{\hat\phi_i^-}{\bt_{\overline{DE}}},
\end{equation*}
where $\bt_{\overline{DE}}$ is the tangential vector to $\overline{DE}$.
\end{remark}
We denote by $\what{S}_h(T)$ the space of functions generated by $\hat{\phi}_i,\, i=1,2,3$  constructed  above.
Next  we define the global {\em immersed finite element space} $\what{S}_h(\Omega)$ to be the
set of all functions  $\phi\in L^2(\Omega)$ such that

$$
\left\{\begin{array}{l}
 \mbox{ }\phi\in \what{S}_h(T) \mbox{ if } T\in \mathcal{T}_h^I, \mbox{ and }  \phi\in \overline{S}_h(T) \mbox{ if } T \not\in \mathcal{T}_h^I, \\
\mbox{ having continuity at all vertices of the triangulation }\\
\mbox{ and vanishes on the boundary vertices.}
 \end{array}
\right\} $$
  We note that a function in $\what{S}_h(\Omega)$, in general,  is not
continuous across an edge common to two interface elements.
Let $H_h(\Omega):=H^1_0(\Omega)+\what{S}_h(\Omega)$
and equip it with the piecewise norms $|u|_{1,h}:=|u|_{\wtilde{H}^1(\Omega)},\,\|u\|_{1,h}:=\|u\|_{\wtilde{H}^1(\Omega)} $.
Next, we define the interpolation operator.
For any $u\in \wtilde{H}^2_{\Gamma}(T)$, we let $\hat I_h u\in \what{S}_h(T)$ be such that
$$\hat I_h u(A_i) = u(A_i),~~ i=1,2,3, $$ where $A_i,\,i=1,2,3$ are the vertices of
$T$ and we call $\hat I_h u$ the local \emph{interpolant} of $u$ in $\what{S}_h(T)$. We
naturally extend it to $\wtilde{H}^2_{\Gamma}(\Omega)$ by  $(\hat I_hu)|_{T} =\hat I_h (u|_T)$ for each $T$.
Then  we have the following approximation property \cite{Kwak-We-Chang}, \cite{Li2004}.%: For all  $u\in \wtilde{H}^2_{\Gamma}(\Omega),$

\begin{proposition}\label{approerror}
 There exists a constant $C>0$ such that %for all $u\in \wtilde{H}^2_{\Gamma}(T)$.
\begin{eqnarray}
\sum_{T\in \mathcal{T}_h} (\|u -\hat I_h u\|_{0,T} + h |u - \hat I_h u|_{1,T} )\leq C h^2 \|u\|_{\wtilde{H}^2(\Omega)}
\end{eqnarray} for all $u\in \wtilde{H}^2_{\Gamma}(\Omega)$.
\end{proposition}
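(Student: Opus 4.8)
The plan is to estimate the interpolation error on each element and then sum, splitting
$\sum_{T\in\mathcal{T}_h}=\sum_{T\notin\mathcal{T}_h^I}+\sum_{T\in\mathcal{T}_h^I}$. On a noninterface element the restriction $u|_T$ lies in $H^2(T)$ and $\hat I_h u$ is the ordinary $P_1$ Lagrange interpolant, so the classical Bramble--Hilbert lemma together with a scaling argument yields the per-element bound $\|u-\hat I_h u\|_{0,T}+h\,|u-\hat I_h u|_{1,T}\le Ch^2|u|_{2,T}$, with $C$ depending only on the shape regularity of $\{\mathcal{T}_h\}$. Since $|u|_{2,T}=|u|_{\wtilde H^2(T)}$ on these elements, summing over the noninterface elements disposes of that part. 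The whole difficulty is therefore the analogous per-element bound on the interface elements, where $u$ is merely piecewise $H^2$ and $\hat I_h u\in\what S_h(T)$ is a nonstandard shape function.

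The starting observation on an interface element is that $\hat I_h$ reproduces the immersed space: for every $v\in\what S_h(T)$ the unique element of $\what S_h(T)$ matching the nodal values $v(A_i)$ is $v$ itself, so $\hat I_h v=v$, and every such $v$ is piecewise linear, whence $|v|_{\wtilde H^2(T)}=0$. This is exactly the reproduction hypothesis needed for a Bramble--Hilbert-type argument, now phrased in the broken space $\wtilde H^2_\Gamma(T)$. I would then map $T$ affinely onto a unit reference triangle, which is permissible by quasi-uniformity, reducing matters to a bound of the form $\|v-\hat I_h v\|_{0}+|v-\hat I_h v|_{1}\le C|v|_{\wtilde H^2}$ on the reference configuration, uniformly in the way $\Gamma$ meets $T$, and scale back to recover the powers of $h$.

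To obtain this reference estimate I would extend each smooth piece $u|_{T^s}$ to $\tilde u^s\in H^2(T)$ with uniformly controlled norm, apply Bramble--Hilbert to each $\tilde u^s$ against linear polynomials, and then use the defining relations (\ref{def:basis-2})--(\ref{def:basis-3}) to show that the two linear pieces of $\hat I_h u$ differ from these Taylor polynomials by controlled amounts. The stability needed here---that the nodal basis $\hat\phi_i$, and hence $\hat I_h$, are bounded independently of the interface position---follows by solving the small linear system (\ref{def:basis-1})--(\ref{def:basis-3}) explicitly and checking that its determinant stays bounded away from zero, with the fixed positive constants $\beta^\pm$ and the shape regularity preventing degeneracy. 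One further perturbation must be accounted for: $u$ satisfies the flux condition across the curved arc $\Gamma\cap T$, whereas $\hat\phi_i$ enforces it across the chord $\overline{DE}$; since $\Gamma$ is $C^2$, the lens between arc and chord has width $O(h^2)$ and area $O(h^3)$, so this mismatch contributes only higher-order terms that are absorbed into the $O(h^2)$ estimate.

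The step I expect to be hardest is securing the uniformity of all these constants in the degenerate regime, where $\overline{DE}$ slices off a thin sliver and one of $T^+,T^-$ nearly collapses: there both the $H^2$-extension constant and the basis stability threaten to blow up. I would control this by keeping the entire argument on the reference element and observing that the measure of the collapsing subtriangle shrinks at precisely the rate needed to compensate any growth of the constants, so that its contribution to the interpolation error stays $O(h^2)$ and no genuine loss occurs. Summing the resulting per-element estimates over $\mathcal{T}_h$ then yields the stated global bound.
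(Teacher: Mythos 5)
The paper does not prove this proposition itself; it cites \cite{Kwak-We-Chang} and \cite{Li2004}, so the relevant comparison is with the arguments there. Your treatment of noninterface elements is standard and fine, and you have correctly located the difficulty on the interface elements. But the route you propose for those elements has a genuine gap at its load-bearing step: the extension of $u|_{T^s}$ from the curvilinear sub-element $T^s$ to all of $T$ ``with uniformly controlled norm.'' The $H^2$-extension constant of a domain depends on its Lipschitz character, and when $\overline{DE}$ slices off a thin sliver this constant blows up; it is not controlled by the measure of the sliver, so your closing claim that ``the measure of the collapsing subtriangle shrinks at precisely the rate needed to compensate any growth of the constants'' is an assertion, not an argument, and I do not see how to make it true. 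The published proofs avoid this entirely by extending $u^{\pm}$ from the \emph{global} subdomains $\Omega^{\pm}$ across the $C^2$ interface $\Gamma$, so the extension operator and its constant depend only on $\Gamma$ and never on how $\Gamma$ happens to cut an individual triangle; one then compares $\hat I_h u$ on $T^{s}$ with the standard linear interpolant of the globally extended $\tilde u^{s}$. This also changes the bookkeeping: one does not obtain a per-element bound by $|u|_{\wtilde H^2(T)}$ alone, but rather sums squares over the strip of interface elements and controls the total by $\|u\|_{\wtilde H^2(\Omega)}$ via the boundedness of the extension.

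Two further steps are asserted rather than established. First, the Bramble--Hilbert/quotient argument in the broken space requires a Deny--Lions constant that is uniform over the continuum of interface positions inside the reference triangle; moreover the kernel of the broken $H^2$ seminorm inside $\wtilde H^2_{\Gamma}(T)$ consists of piecewise linears satisfying the flux condition on the arc $\Gamma\cap T$, which is not the space $\what S_h(T)$ (whose flux condition lives on the chord $\overline{DE}$), so the reproduction hypothesis and the quotient do not match as stated. Second, the sentence ``use the defining relations (\ref{def:basis-2})--(\ref{def:basis-3}) to show that the two linear pieces of $\hat I_h u$ differ from these Taylor polynomials by controlled amounts'' is precisely where the real work of \cite{Li2004} and \cite{Ch-Kw-We} lies: one must quantify the discrepancy between the immersed interpolant and the two one-sided standard interpolants, which hinges on the near-satisfaction of the chord flux condition by the smooth extensions and on explicit formulas for the $\hat\phi_i$; nonvanishing of the determinant of the small linear system gives existence and uniqueness of the basis, but not by itself the $O(h^2)$ consistency you need. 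Your observation about the $O(h^3)$ area of the lens between arc and chord is the right idea for that particular perturbation, but it is the smallest of the difficulties here.
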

%\correct{By carefully applying  scaling argument in the proof of the approximation property in \cite{Kwak-We-Chang},
% we can obtain the following  refined version of interpolation theorem.
%\begin{proposition}\label{approerror}
% There exists a constant $C>0$ such that %for all $u\in \wtilde{H}^2_{\Gamma}(T)$.
%\begin{eqnarray}
%  \|u -\hat I_h u\|_{0,T^s} + \sqrt{|T^s|} |u - \hat I_h u|_{1,T^s}  \leq C  |T^s|  \|u\|_{\wtilde{H}^2(T)}
%\end{eqnarray} for all $u\in \wtilde{H}^2_{\Gamma}(T)$.
%\end{proposition}
% \begin{proof}
%  Sketch of proof. Even though the degrees of freedom are distinct from each other (one is vertex d.of. the other is edge average d.o.f), the local behavior is exactly the same since they are both dealing with piecewise linear
%   functions. Thus it suffices to apply the scaling argument in Lemma 2.5 of \cite{Kwak-We-Chang}
% to the sets  $T^{s}, s= +,-$. The factor $h$ is now replaced by the square root of the
% area $T^s$. This completes the proof.%
% \end{proof}
%}
%Now we define the immersed finite element
 With $P_1$- Lagrange basis function introduced in \cite{Li2003}, \cite{Li2004}, the IFEM reads:
 \newline
{\bf ($P_1$-IFEM)} Find
$ {u}_h\in \what{S}_h(\Omega)$ such that
\begin{eqnarray} \label{dp}
a_h( {u}_h, {v}_h) = (f, {v}_h),~~~\forall\,
 {v}_h\in\what{S}_h(\Omega),
\end{eqnarray}
where
\begin{equation*}
a_h(u,v) =\sum_{T\in \mathcal{T}_h} \int_T \beta \nabla u \cdot \nabla  v\,
dx,~~~ \forall\,u,v\in H_h(\Omega).
\end{equation*}
The error estimate for this scheme is shown in \cite{Ch-Kw-We}.
%==============================================================================
%
\section{ Modified $P_1$-IFEM }

%This scheme has been tested  and seems to work well for certain examples \cite{Li2003}, \cite{Li2004}, and
%its convergence is analyzed in \cite{Ch-Kw-We}. However, later numerical tests show that this scheme is not robust
% for some other problems. The source of error seems to be the discontinuous line integrals along the edges which do not cancel.
In this section, we modify the  $P_1$-IFEM above by adding the line integrals for jumps of fluxes and functional values.
The method  resembles the discontinuous
 Galerkin methods (see \cite{Ar-B-Co-Ma}, \cite{Doug-Du}, \cite{Riv-Wh-Gi} and references therein) which use completely
 discontinuous basis functions,
 but the degrees of freedom in our method are much smaller than the DG methods since our method
 has the same number of basis functions as the conventional $P_1$-FEM.

In order to describe the new  method, we need some additional  notations.
Let the collection of all the edges of $T\in \mathcal T_h$ be denoted by $\mathcal{E}_h$ and
we split $\mathcal{E}_h$ into two disjoint sets; $\mathcal{E}_h = \mathcal{E}_h^o\cup \mathcal{E}_h^b, $
where $\mathcal{E}_h^o$ is the set of edges lying in the interior of $\Omega$,
and $\mathcal{E}_h^b$ is the set of edges on the boundary of $\Omega$.
In particular, we denote the set of edges cut by the interface $\Gamma$ by $\mathcal{E}_h^I$.
For every  $e\in\mathcal{E}_h^o$, there are two element $T_1$ and $T_2$ sharing $e$ as a common edge.
 Let $\bn_{T_i},  i=1,2$ be the
unit outward normal vector to the boundary of $T_i$, but  for the edge $e$,
we choose a direction of the normal vector, say $\bn_e=\bn_{T_1}$  and fix it once and for all.
For functions $v$ defined on $ T_1 \cup T_2 $, we let $[\cdot]_e$ and $\{\cdot\}_e$  denote the jump and average across
$e$ respectively,  i.e.
$$[v]_e = v^1 - v^2, \{v\}_e = \frac{1}{2} (v^1 + v^2). $$
We also need  the  mesh dependent norm $\tnorm{\cdot}$ on the space $H_h(\Omega)$,
\begin{eqnarray*}\label{energyN}
 \tnorm {v}^2 &:=&\sum_{T\in \mathcal T_h} \left(\left\|\sqrt\beta v\right\|_{0,T}^2+
\left\|\sqrt\beta\nabla v\right\|_{0,T}^2 \right) \\
&&+ \sum_{e\in\mathcal{E}_h^o} \left(h\left\|\{\sqrt\beta \nabla v\ \cdot \bn_e\}_e\right\|_{0,e}^2
 +  h^{-1}\left\|[\sqrt\beta v]_e\right\|_{0,e}^2 \right) .
\end{eqnarray*}

Multiplying both sides of the equation (\ref{TheprimalEq}) by $v\in H^1(T)$, applying Green's formula
and adding, we get
 \begin{eqnarray*}
 \sum_{T\in \mathcal T_h} \left( \int_{T} \beta \nabla u\cdot \nabla v dx
 - \int_{\partial T}  \beta \nabla u \cdot \bn_T \, v ds \right)
 =\int_{\Omega} fv dx.
 \end{eqnarray*}
% $$a_1b_1 - a_2b_2 = \frac12 [(a_1 + a_2)(b_1- b_2) + (a_1 - a_2)(b_1 + b_2)]$$
 By using the preassigned normal vectors $\bn_e$ and adding the
unharmful term  $ \eps\int_{e} \{\beta \nabla v \cdot \bn_e \}_{e} [u]_{e}$ for any $\eps$, we see the above equation becomes
 \begin{multline}\label{DG_conti}
 \sum_{T\in \mathcal T_h}  \int_{\T} \beta \nabla u\cdot \nabla v dx - \sum_{e\in \mathcal{E}_h^o} \int_{e}
 \{\beta \nabla u \cdot \bn_e \}_{e} [v]_{e}ds\\
 + \eps\sum_{e\in \mathcal{E}_h^o} \int_{e}\{\beta \nabla v\cdot\bn_e\}_{e}[u]_{e}ds
 =  \int_\Omega f v dx
 \end{multline} which is valid for $v\in L^2(\Omega)$ such that $v\in H^1(T)$ for all $T\in \mathcal T_h$.
We define the following bilinear forms
\begin{eqnarray*} \label{a-e-form}
% a(u,v) &:=& \sum_{T\in \mathcal T_h} \int_T\beta\nabla u \cdot \nabla v \,dx  \\
  b_\eps(u,v) &:=& -\sum_{e\in \mathcal{E}_h^o} \int_{e}\left\{\beta\nabla u\cdot\bn_e\right\}_{e}[v]_{e} ds
 +\eps\sum_{e\in \mathcal{E}_h^o} \int_e\left\{\beta\nabla v\cdot\bn_e\right\}_{e}[u]_{e} ds,  \\
  j_\sigma(u,v) &:=&\sum_{e\in \mathcal{E}_h^o} \int_{e}\frac \sigma h [u]_{e}[v]_{e} ds, \mbox { for some } \sigma>0\\
%j_\sigma(u,v) &:=&\sum_{e\in \mathcal{E}_h^o} \sum_{s=+,-} \int_{e^s}\frac \sigma {|\tilde{e}^s|} [u]_{e}[v]_{e} ds , \mbox { for some } \sigma>0\\
   a_\eps (u,v) &:= & a_h(u,v) + b_\eps(u,v) +j_\sigma(u,v).
\end{eqnarray*}
Now, for each $\eps=0$, $\eps=-1$ and $\eps=1$, we define the modified $P_1$-IFEM for the problem
 (\ref{TheprimalEq})-(\ref{jump_condition}): \\
{\bf(Modified $P_1$-IFEM)}
Find $u_h^m \in \what{S}_h(\Omega)$ such that
\begin{equation}\label{Modi-IFEM}
a_\eps(u_h^m , v_h) = (f,v_h), ~~~ \forall v_h \in \what{S}_h(\Omega).
 \end{equation}
This is similar to a class of DG methods, corresponding to  IP, SIPG,
 NIPG and OBB (\cite{Arnold-IP}, \cite{Doug-Du}, \cite{Da-Sun-Wh}, \cite{OBB}), if $\eps=0$, $\eps=-1$, $\eps=1$,  and $\eps=0,\sigma=0$, respectively.

% Mine is OBB-DG
\begin{remark}
For the line integrals in $b_\eps(u,v)$, it suffices to consider the integrals on the edges of the interface elements only since  both $[u_h], [v_h]$ vanish for $e\in \mathcal{E}_h^o\setminus \mathcal{E}_h^I$.
\end{remark}

\section{ Error analysis } % of modified $P_1$ IFEM
In this section, we prove an optimal order of error estimates in $H^1$ and $L^2$-norms of our schemes.
For simplicity, we present the case with  $\eps=-1$ only. All other cases are similar.
Also, we assume the smooth interface is replaced by piecewise line segment on each element.

We need the well-known inverse inequality and trace theorem (\cite{Arnold-IP}, \cite{Ciarlet}):
\begin{lemma}
There exist positive constants $C_0,C_1$ independent of the function $v_h$ such that
for all $v_h\in P_k(T)\cup \hat S_h(T)$,
\begin{align} \label{trace_ineq}
\|v_h\|_{1,T}^2\le C_0 h^{-2} \|v_h\|_{0,T}^2,&\quad \|v_h\|_{0,\partial T}^2 \le C_1 h^{-1}\|v_h\|_{0,T}^2.
% &\|v\|_\infty \le C h^{-1} \|v\|_{0,T}
\end{align}
There exists a positive constant $C_2$ independent of the function $v$ such that
for all $ v\in H^1(T)$ %Trace inequalities.
\begin{align} \label{trace-edge}
\|v\|_{0,e}^2\le C_2(h^{-1} \|v\|_{0,T}^2+ h|v|_{1,T}^2).
\end{align}
\end{lemma}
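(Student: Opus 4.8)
The plan is to prove all three estimates by the standard device of scaling to a fixed reference triangle $\hat T$ and keeping track of the powers of $h$. Let $F_T:\hat T\to T$ denote the affine map $x=F_T(\hat x)$ and write $\hat v := v\circ F_T$. Because the triangulation $\{\mathcal T_h\}$ is quasi-uniform, $T$ is shape regular with diameter comparable to $h$, and the Jacobian determinant of $F_T$ is comparable to $h^2$; this yields the familiar scaling relations (in two dimensions)
\[
\|v\|_{0,T}^2 \simeq h^2\,\|\hat v\|_{0,\hat T}^2,\quad
|v|_{1,T}^2 \simeq |\hat v|_{1,\hat T}^2,\quad
\|v\|_{0,e}^2 \simeq h\,\|\hat v\|_{0,\hat e}^2,
\]
where $\simeq$ hides constants depending only on the shape-regularity of $\mathcal T_h$ and $\hat e$ is the edge of $\hat T$ corresponding to $e\subset\partial T$.

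For the third inequality (\ref{trace-edge}), which is required only for $v\in H^1(T)$, I would invoke the classical trace theorem on the reference element, $\|\hat v\|_{0,\hat e}^2\le C(\|\hat v\|_{0,\hat T}^2+|\hat v|_{1,\hat T}^2)$, and insert the scaling relations above: the $\|\hat v\|_{0,\hat T}^2$ term produces $h^{-1}\|v\|_{0,T}^2$ and the $|\hat v|_{1,\hat T}^2$ term produces $h\,|v|_{1,T}^2$, giving exactly (\ref{trace-edge}). This step uses no finite dimensionality and is the most straightforward.

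For the inverse inequality and the discrete trace inequality in (\ref{trace_ineq}), which are claimed only for $v_h\in P_k(T)\cup\hat S_h(T)$, I would exploit the equivalence of all norms on a finite-dimensional space over the fixed reference triangle. On $\hat T$ one has $|\hat v_h|_{1,\hat T}\le C\|\hat v_h\|_{0,\hat T}$ and $\|\hat v_h\|_{0,\partial\hat T}\le C\|\hat v_h\|_{0,\hat T}$; scaling the first gives $|v_h|_{1,T}^2\le C h^{-2}\|v_h\|_{0,T}^2$ (the full $\|\cdot\|_{1,T}$ estimate follows by absorbing the lower-order term for $h$ bounded), and scaling the second gives $\|v_h\|_{0,\partial T}^2\le C h^{-1}\|v_h\|_{0,T}^2$. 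For the genuine polynomial space $P_k(T)$ these are textbook facts (\cite{Ciarlet}).

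The one point requiring care --- and the step I expect to be the main obstacle --- is that the immersed space $\hat S_h(T)$ is \emph{not} a fixed finite-dimensional space under pullback: a function in $\hat S_h(T)$ is only piecewise linear, with a gradient jump across the chord $\overline{DE}$, so its pullback lives in a three-dimensional space $\hat S(\hat T;\,\hat\ell)$ depending on the position of the pulled-back chord $\hat\ell=F_T^{-1}(\overline{DE})$. Consequently the naive norm-equivalence constant could in principle degenerate as the cut geometry varies. To close this, I would argue that the relevant constants are uniform over all admissible cuts: the configuration of $\hat\ell$ (the two intersection points on $\partial\hat T$, together with the fixed ratio $\beta^+/\beta^-$) ranges over a compact set, the immersed basis functions depend continuously on this configuration, and on the degenerate boundary of the configuration space $\hat S(\hat T;\hat\ell)$ reduces to $P_1(\hat T)$, for which the constants are finite. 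A compactness argument then furnishes a single constant valid for all interface elements; alternatively one may bound the constants directly from the explicit formulas for $a^\pm,b^\pm,c^\pm$ in (\ref{def:basis-1})--(\ref{def:basis-3}), whose denominators stay uniformly bounded away from zero under shape-regularity. This uniformity is precisely what makes $C_0,C_1$ independent of $h$ and of the particular interface element.
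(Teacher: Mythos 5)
Your proposal is correct in substance, but note that the paper itself gives \emph{no} proof of this lemma: it is stated as ``the well-known inverse inequality and trace theorem'' with citations to \cite{Arnold-IP} and \cite{Ciarlet}, and the argument is left entirely to the references. For $v\in H^1(T)$ and for $v_h\in P_k(T)$ your scaling argument is exactly the textbook proof those citations point to, so there is nothing to compare there. The genuinely valuable part of your write-up is the third paragraph: you correctly observe that the cited references do \emph{not} cover $\hat S_h(T)$, since the pullback of an interface element's IFE space is a cut-dependent family of piecewise-linear spaces, and a naive piecewise application of the polynomial estimates on $T^+$ and $T^-$ fails because the sub-elements need not be shape regular. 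Your proposed fix --- uniformity of the norm-equivalence constants over the compact set of admissible cut configurations, with the degenerate limits reducing to $P_1(\hat T)$ --- is the standard device in the IFE literature (it is essentially how \cite{Ch-Kw-We} and \cite{Li2004} obtain uniform bounds on the modified basis functions), so your route fills a gap the paper glosses over rather than diverging from it. If you were to write this out in full, the one step to make precise is why the configuration-dependent constant $C(\hat\ell)=\sup_{0\neq \hat v}\,|\hat v|_{1,\hat T}^2/\|\hat v\|_{0,\hat T}^2$ attains a finite supremum: you need the Gram matrix of the basis $\{\hat\phi_i(\cdot;\hat\ell)\}$ in $L^2(\hat T)$ to be uniformly positive definite as $\hat\ell$ varies (equivalently, uniform linear independence up to and including the degenerate configurations), which follows from the continuous dependence of $a^{\pm},b^{\pm},c^{\pm}$ on the cut points together with the nonvanishing of the determinant in the unisolvency system; your alternative of bounding the coefficients directly from that system is the cleaner way to nail this down.
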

Now we show the following interpolation error estimate for the mesh dependent norm $\tnorm{\cdot}$.
\begin{proposition} \label{approx-inter} There exist positive constants $C$, $C_I$ independent of the function $u$ such that
for all $ u\in  H^1(\Omega)\cap  \wtilde{H}^2_{\Gamma}(\Omega)$,
\begin{equation}\label{inter-edge}\sum_{e\in\mathcal{E}_h^o} h\left\|\{\nabla (u-\hat I_h u) \cdot \bn_e\}_e\right\|_{0,e}^2 +\sum_{e\in \mathcal{E}_h^o} h^{-1}\left\|[u- \hat I_h u]_e\right\|_{0,e}^2 \le
Ch^{2}\|u\|_{\widetilde{H}^2(\Omega)}^2.\end{equation}
Consequently, we have
\begin{equation}\label{inter-trip}
  \tnorm{u-\hat I_h u} \le C_I h \|u\|_{\widetilde{H}^2(\Omega)}.
\end{equation}
\end{proposition}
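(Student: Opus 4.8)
The plan is to establish the edge estimate (\ref{inter-edge}) by treating its two sums separately with the trace inequality (\ref{trace-edge}) and the interpolation bound of Proposition \ref{approerror}, and then to deduce (\ref{inter-trip}) by adding the volume part of $\tnorm{\cdot}$. Throughout I write $w:=u-\hat I_h u$ and use that $\beta$ is bounded above and below, so the weights $\sqrt\beta$ in $\tnorm{\cdot}$ cost only a factor depending on $\beta^+,\beta^-$ and may be dropped. I also use two structural facts: $u\in H^1(\Omega)$ is single-valued on edges, and $\hat I_h u$ is continuous across every non-interface edge (its restrictions from the two adjacent elements are linear and agree at the shared vertices). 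Hence $[w]_e=0$ on each $e\in\mathcal E_h^o\setminus\mathcal E_h^I$, and the jump sum reduces to the interface edges $\mathcal E_h^I$.

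For the jump sum $\sum_{e} h^{-1}\|[w]_e\|_{0,e}^2$ the point is that on each element $T$ (interface or not) we have $w\in H^1(T)$, since $\hat I_h u$ is continuous and piecewise linear and $u\in H^1(T)$. Writing $[w]_e=w|_{T_1}-w|_{T_2}$, using the triangle inequality to bound $h^{-1}\|[w]_e\|_{0,e}^2$ by $2h^{-1}\sum_{i=1,2}\|w\|_{0,e\cap\partial T_i}^2$, and applying (\ref{trace-edge}) on the whole element $T_i$, I get $h^{-1}\|w\|_{0,e}^2\le C\big(h^{-2}\|w\|_{0,T_i}^2+|w|_{1,T_i}^2\big)$. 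Summing over edges and elements and using Proposition \ref{approerror} elementwise and squared, namely $\sum_T\|w\|_{0,T}^2\le Ch^4\|u\|_{\wtilde H^2(\Omega)}^2$ and $\sum_T|w|_{1,T}^2\le Ch^2\|u\|_{\wtilde H^2(\Omega)}^2$, both contributions are $O(h^2\|u\|_{\wtilde H^2(\Omega)}^2)$.

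The average-flux sum is the hard one, since it involves $\nabla w$ and therefore needs second-order regularity. On a non-interface element this is routine: $\nabla w\cdot\bn_e\in H^1(T)$, so (\ref{trace-edge}) on $T$ gives $h\|\nabla w\cdot\bn_e\|_{0,e}^2\le C\big(|w|_{1,T}^2+h^2|u|_{2,T}^2\big)$, using that $\hat I_h u$ is linear so $|w|_{2,T}=|u|_{2,T}$. On an interface element, however, $w$ is only piecewise $H^2$, so $\nabla w\notin H^1(T)$ and (\ref{trace-edge}) cannot be applied on $T$. Here I split $T$ into $T^+$ and $T^-$, write the edge as $e=e^+\cup e^-$, and apply the trace inequality separately on each subregion, obtaining $h\|\nabla w\cdot\bn_e\|_{0,e^s}^2\le C\big(|w|_{1,T^s}^2+h^2|u|_{\wtilde H^2(T^s)}^2\big)$. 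Summing over $s\in\{+,-\}$, over the at most two elements meeting $e$, and over all edges, and then invoking Proposition \ref{approerror} together with $\sum_T|u|_{\wtilde H^2(T)}^2=|u|_{\wtilde H^2(\Omega)}^2$, the whole sum is again $O(h^2\|u\|_{\wtilde H^2(\Omega)}^2)$, which proves (\ref{inter-edge}). The \emph{main obstacle} is precisely this subregion trace step: the constant in a trace inequality on $T^s$ a priori depends on the shape of $T^s$, which can degenerate into an arbitrarily thin sliver as the interface approaches a vertex. I must therefore justify a trace inequality on the cut pieces $T^\pm$ with a constant independent of the position of the interface. This is where the standing assumption that the interface is replaced by a line segment on each element (so that $T^\pm$ are polygons) is used, together with the smallness of $h$; the required uniform trace and inverse estimates on such cut elements are available from the IFEM literature (\cite{Kwak-We-Chang}, \cite{Li2004}, \cite{Ch-Kw-We}).

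Finally, (\ref{inter-trip}) is immediate from (\ref{inter-edge}). In $\tnorm{w}^2$ the volume terms satisfy $\sum_T\big(\|\sqrt\beta w\|_{0,T}^2+\|\sqrt\beta\nabla w\|_{0,T}^2\big)\le\beta_{\max}\sum_T\big(\|w\|_{0,T}^2+|w|_{1,T}^2\big)\le C(h^4+h^2)\|u\|_{\wtilde H^2(\Omega)}^2$ by Proposition \ref{approerror}, while the two edge sums are exactly those bounded in (\ref{inter-edge}), up to the factor $\beta_{\max}$ absorbed into $C$. Adding these gives $\tnorm{u-\hat I_h u}^2\le C_I^2\,h^2\|u\|_{\wtilde H^2(\Omega)}^2$, as claimed.
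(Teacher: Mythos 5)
Your treatment of the jump term and the final passage from (\ref{inter-edge}) to (\ref{inter-trip}) is sound and essentially identical to the paper's. The gap is in the average-flux term on interface elements, precisely at the step you yourself flag as the main obstacle. You split $T$ into $T^+$ and $T^-$ and apply the trace inequality (\ref{trace-edge}) on each cut piece $T^s$. But that inequality does \emph{not} hold on $T^s$ with a constant independent of where the interface cuts $T$: if $T^-$ is a sliver of width $\delta\ll h$ along the edge $e$, then for $v\equiv 1$ one has $\|v\|_{0,e^-}^2\sim h$ while $h^{-1}\|v\|_{0,T^-}^2+h|v|_{1,T^-}^2\sim\delta$, so the constant blows up like $h/\delta$. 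The references you cite do not supply a uniform cut-element trace inequality (their interpolation estimates are obtained by other means, e.g.\ Sobolev extension of $u|_{\Omega^s}$ to the whole of $T$); so deferring to ``the IFEM literature'' does not close the argument. A repair along your lines would require extending $u|_{T\cap\Omega^s}$ to an $H^2(T)$ function, applying the full-element trace inequality to the extension, and then controlling the discrepancy between the extension and $\hat I_h u$ --- none of which is set up in your proposal.

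The paper avoids the cut-element issue entirely by a different decomposition: it writes $\nabla(u-\hat I_h u)=\bw+\bz$ with $\bw$ the component normal to $\Gamma$ and $\bz$ the tangential component, and observes that $\beta\bw$ lies in $H^1(T)$ on the \emph{whole} element (because both $u$ and $\hat I_h u$ satisfy the flux jump condition $[\beta\,\partial(\cdot)/\partial\bn]=0$ across the segment $\overline{DE}$, so the normal flux has matching traces), while $\bz\in H^1(T)$ because tangential derivatives of the continuous functions $u$ and $\hat I_h u$ match along $\overline{DE}$. The standard trace inequality (\ref{trace-edge}) then applies on the full, shape-regular element $T$ to each of $\beta\bw\cdot\bn_e$ and $\bz\cdot\bn_e$, at the cost only of the factor $\alpha=\beta_{\max}/\beta_{\min}$, and Proposition \ref{approerror} finishes the estimate. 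This is the idea your proof is missing; without it (or an extension-operator argument), the subregion trace step fails.
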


\begin{proof}
  We first consider $\nabla (u-\hat I_h u)$.
Since $\nabla (u-\hat I_h u)$ is not in $H^1(T)$, we cannot apply  (\ref{trace-edge})  directly.
Instead, we decompose it as
$$ \nabla (u-\hat I_h u)= (\nabla (u-\hat I_h u)\cdot\bn_\Gamma)\bn_\Gamma+ (\nabla (u-\hat I_h u)\cdot\bt_\Gamma) \bt_\Gamma :=\bw+\bz,   $$
where $\bn_\Gamma$ and $\bt_\Gamma$ are the unit normal and tangent vector to the interface $\Gamma$, respectively.
We have
\begin{eqnarray*}
 \left\|\nabla (u-\hat I_h u)\cdot\bn_e\right\|_{0,e}^2
 &\le & \| \bw \cdot\bn_e\|_{0,e}^2+ \| \bz\cdot\bn_e\|_{0,e}^2 \\
 &\le & \frac{1}{\beta_{min}^2}\left\|\beta \bw \cdot\bn_e\right\|_{0,e}^2+ \| \bz\cdot\bn_e\|_{0,e}^2.
\end{eqnarray*}
We can easily check that $\beta \bw $ is in $H^1(T)$.
 For the smoothness of $\bz $ we proceed as follows: Since $u\in H^1(T)$, we have $ \left. (\nabla u \cdot \bt_\Gamma) \right|_{T^+\cap \Gamma} = \left. (\nabla u \cdot \bt_\Gamma) \right|_{T^-\cap \Gamma}$.
 Hence $ \left. (\nabla u \cdot \bt_\Gamma) \right|_T$   has well defined trace on $\Gamma$, which implies $\nabla u\cdot\bt_\Gamma$  is in $\tilde H^1(T)$ also.
Therefore, we can apply (\ref{trace-edge}) to $\beta \bw\cdot\bn_e$ and $\bz \cdot\bn_e$. Hence
\begin{eqnarray*}
&&h\left\|\nabla (u-\hat I_h u)\cdot\bn_e\right\|_{0,e}^2 \\
&\le& \frac{C_2}{\beta_{min}^2} \left(\|\beta \bw \cdot\bn_e\|_{0,T}^2 + h^2 |\beta \bw \cdot\bn_e|_{1,T}^2 \right)+ C_2 \left(\| \bz\cdot\bn_e\|_{0,T}^2  + h^2|\bz\cdot\bn_e|_{1,T}^2\right) \\
&\le& C_2 \left(\frac{\beta_{max}^2}{\beta_{min}^2}\left(\|\bw \|_{0,T}^2+ h^2 |\bw |_{\tilde H^1(T)}^2  \right)
+  \|\bz\|_{0,T}^2 + h^2 |\bz|_{\tilde H^1(T)}^2 \right) \\
&\le& C_2 \ \alpha^2 \left( \left\| \nabla(u-\hat I_h u) \right\|_{0,T}^2 + h^2 \left| \nabla(u-\hat I_h u)\right|_{\tilde H^1(T)}^2\right)\\
&\le& C h^2  \left\| u \right\|_{\tilde H^2(T)}^2 ,
\end{eqnarray*}
where $\beta_{min} = \min(\beta^+, \beta^-)$ , $\beta_{max} = \max(\beta^+, \beta^-)$ and we have set $\alpha = \frac{\beta_{max}}{\beta_{min}}$.
Here   Proposition \ref{approerror} was used to derive  the last estimate.  

The estimate of the second term follows easily from  (\ref{trace-edge})
  and Proposition \ref{approerror}:
\begin{eqnarray*}
h^{-1}\left\| u-\hat I_h u \right\|_{0,\partial T}^2 &\le& C_2 \left(h^{-2} \left\| u-\hat I_h u\right\|_{0,T}^2 +  \left| u-\hat I_h u\right|_{\tilde H^1(T)} ^2 \right) \\
& \le & C h^2 \| u\|_{\tilde H^2(T)} ^2.
\end{eqnarray*}
Thus, the estimate (\ref{inter-trip}) follows.
\end{proof}
  The following discrete Poincar\'e inequality holds for $\hat S_h(\Omega)$, (see \cite{Ch-Kw-We}).
\begin{lemma} \label{poin-sh}
There exists a constant $C_p>0$ such that
\begin{equation}
 C_p\|v_h\|_{0,\Omega}^2 \le |v_h|_{1,h}^2,\  ~~~ \forall v_h \in \hat S_h(\Omega).
\end{equation}
\end{lemma}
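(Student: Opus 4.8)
The plan is to compare each $v_h \in \hat S_h(\Omega)$ with a genuinely conforming companion and thereby reduce the inequality to the classical Poincar\'e inequality on $H^1_0(\Omega)$. Since every $v_h \in \hat S_h(\Omega)$ is single-valued at the vertices of $\mathcal T_h$ and vanishes at the boundary vertices, I would define $E_h v_h \in \overline{S}_h(\Omega)$ to be the continuous piecewise linear function taking the same nodal values, $E_h v_h(A_i) = v_h(A_i)$ at every vertex $A_i$. Then $E_h v_h \in \overline{S}_h(\Omega) \subset H^1_0(\Omega)$, so the usual Poincar\'e inequality gives $\|E_h v_h\|_{0,\Omega} \le C |E_h v_h|_{1,\Omega}$. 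The key structural observation is that on any noninterface element $T$ the IFE function $v_h|_T$ is already the standard linear function determined by its vertex values, so $v_h = E_h v_h$ there; hence the difference $v_h - E_h v_h$ is supported only on the interface elements $\mathcal T_h^I$, whose union is a strip of width $O(h)$ around $\Gamma$.

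Next I would bound the two pieces of $\|v_h\|_{0,\Omega} \le \|E_h v_h\|_{0,\Omega} + \|v_h - E_h v_h\|_{0,\Omega}$ entirely by $|v_h|_{1,h}$. For the conforming part this requires the elementwise estimate $|E_h v_h|_{1,T} \le C |v_h|_{1,T}$; summing over $T$ gives $|E_h v_h|_{1,\Omega} \le C|v_h|_{1,h}$, and the Poincar\'e inequality then controls $\|E_h v_h\|_{0,\Omega}$. For the remainder, on each interface element the function $v_h - E_h v_h$ vanishes at all three vertices, so a scaling/Poincar\'e argument on $T$ yields $\|v_h - E_h v_h\|_{0,T} \le C h |v_h - E_h v_h|_{1,T} \le C h |v_h|_{1,T}$; squaring and summing over $\mathcal T_h^I$ produces $\|v_h - E_h v_h\|_{0,\Omega} \le C h |v_h|_{1,h}$. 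Combining the two bounds gives $\|v_h\|_{0,\Omega} \le C|v_h|_{1,h}$, which is the claim after squaring and setting $C_p = C^{-2}$.

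The main obstacle is the elementwise equivalence on interface elements, namely $|E_h v_h|_{1,T} \le C|v_h|_{1,T}$ (equivalently $|v_h - E_h v_h|_{1,T}\le C|v_h|_{1,T}$) with a constant $C$ that is independent not only of $h$ but also of how the interface $\Gamma$ cuts $T$, in particular uniform as one of the subelements $T^+$ or $T^-$ degenerates. Both $v_h$ and $E_h v_h$ depend linearly on the same vertex-value vector $(v_h(A_1),v_h(A_2),v_h(A_3))$, and since the constant function lies in both $\overline{S}_h(T)$ and $\hat S_h(T)$, the two seminorms factor through this vector modulo constants and are therefore equivalent norms on a fixed two-dimensional space; the delicate point is the uniformity of the equivalence constant, which may depend on the contrast $\alpha = \beta_{max}/\beta_{min}$ but not on the interface geometry. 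This is precisely the kind of uniform IFE estimate established in \cite{Ch-Kw-We}, and I would invoke it (together with the uniform trace and inverse inequalities already recorded in (\ref{trace_ineq})--(\ref{trace-edge})) to close the argument. An alternative route would avoid the companion altogether: a broken, Brenner-type Poincar\'e inequality bounds $\|v_h\|_{0,\Omega}^2$ by $|v_h|_{1,h}^2 + \sum_{e} h^{-1}\|[v_h]_e\|_{0,e}^2$, and since vertex continuity forces $[v_h]_e$ to vanish at both endpoints of each edge, a one-dimensional Poincar\'e estimate on $e$ followed by (\ref{trace-edge}) absorbs the jump terms back into $|v_h|_{1,h}^2$.
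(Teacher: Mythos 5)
Your proposal is correct. Note, however, that the paper does not prove this lemma at all: it simply cites \cite{Ch-Kw-We} for the discrete Poincar\'e inequality on $\hat S_h(\Omega)$, so there is no in-paper argument to compare against. Your conforming-companion construction $E_h v_h\in\overline S_h(\Omega)\subset H^1_0(\Omega)$ is the natural way to supply the missing proof, and you correctly isolate the only genuinely delicate ingredient, namely the elementwise bound $|E_h v_h|_{1,T}\le C|v_h|_{1,T}$ with $C$ uniform in the position of the cut. It is worth pointing out that this step does not really need to be outsourced: on an interface element the continuity of the tangential derivative along $\overline{DE}$ (Remark \ref{rmk:derivative}) together with the flux condition (\ref{def:basis-3}) gives $|\nabla v_h^{\pm}|\le \alpha\,|\nabla v_h^{\mp}|$ with $\alpha=\beta_{max}/\beta_{min}$, hence $|v_h|_{1,T}^2\ge \alpha^{-2}\max(|\nabla v_h^+|,|\nabla v_h^-|)^2\,|T|$, while $|E_h v_h|_{1,T}^2\lesssim \max(|\nabla v_h^+|,|\nabla v_h^-|)^2\,|T|$ since each vertex difference is bounded by $h\max|\nabla v_h|$; this yields the equivalence with a constant depending only on shape regularity and $\alpha$, independent of how $\Gamma$ cuts $T$. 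Your local estimate $\|v_h-E_h v_h\|_{0,T}\le Ch\,|v_h-E_h v_h|_{1,T}$ is also fine as stated, but be careful to justify it as a statement about the specific piecewise linear functions at hand (each of $T^{\pm}$ contains a vertex of $T$ where the difference vanishes, so a direct pointwise bound works), not as a generic Poincar\'e inequality for $H^1$ functions vanishing at isolated points, which would be false. Your alternative route via a broken Poincar\'e inequality plus the vanishing of $[v_h]_e$ at edge endpoints is equally valid and arguably closer in spirit to the DG framework of the paper.
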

%{\it (Boundedness) }
%Extend the def of interpol.
Now we show some basic properties of $a_\eps (\cdot,\cdot)$. Clearly, $ a_\eps(\cdot,\cdot)$ is bounded on $H_h(\Omega)$ with respect to  $\tnorm{\cdot}$:
$$ |a_\eps (u,v)|\le C_b \tnorm{u}\tnorm{v} ,\quad  ~~~ \forall u,v\in H_h(\Omega).$$

Next, we prove the  coercivity of the form $a_\eps(\cdot,\cdot)$ on the space $\hat S_h(\Omega)$.
We need a lemma.
\begin{lemma} \label{lem:deriv}
For all  $v\in \hat S_h(\Omega)$, there exists a positive constant C independent of h such that
\begin{equation}
\sum_{e\in \mathcal{E}_h^o}h\left\|\left\{\sqrt{\beta}\nabla v\cdot\bn\right\} \right\|^2_{0,e} \le C_1 \alpha \sum_{T\in \mathcal T_h} \left\|\sqrt{\beta}\nabla v\right\|_{0,T}^2
\end{equation} where $\alpha=\frac{\beta_{max}}{\beta_{min}}$ is the same as before.
\end{lemma}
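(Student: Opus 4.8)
The plan is to reduce the edge sum to a sum of elementwise contributions and then control each contribution by the local energy $\|\sqrt\beta\nabla v\|_{0,T}^2$, the only delicate point being the interface elements, where one subelement may be a thin sliver. First I would dispose of the averaging: since $\left(\tfrac{a+b}{2}\right)^2\le\tfrac12(a^2+b^2)$, for every interior edge $e$ shared by $T_1,T_2$ one has $\|\{\sqrt\beta\nabla v\cdot\bn\}_e\|_{0,e}^2\le\tfrac12\big(\|(\sqrt\beta\nabla v\cdot\bn_e)|_{T_1}\|_{0,e}^2+\|(\sqrt\beta\nabla v\cdot\bn_e)|_{T_2}\|_{0,e}^2\big)$. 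Summing over $e\in\mathcal E_h^o$ and regrouping by element, it suffices to prove, for each $T\in\mathcal T_h$,
\[
\sum_{e\subset\partial T}h\,\|(\sqrt\beta\,\nabla v\cdot\bn_e)|_T\|_{0,e}^2\le C\,\alpha\,\|\sqrt\beta\,\nabla v\|_{0,T}^2 .
\]

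For a noninterface element this is immediate: $\beta$ and $\nabla v$ are constant on $T$, so $\sqrt\beta\,\nabla v\cdot\bn_e\in P_0(T)$ and the polynomial trace inequality (\ref{trace_ineq}) gives $h\|\sqrt\beta\,\nabla v\cdot\bn_e\|_{0,e}^2\le h\|\sqrt\beta\,\nabla v\cdot\bn_e\|_{0,\partial T}^2\le C_1\|\sqrt\beta\,\nabla v\cdot\bn_e\|_{0,T}^2\le C_1\|\sqrt\beta\,\nabla v\|_{0,T}^2$, with no factor of $\alpha$ needed since $\alpha\ge 1$.

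The crux is the interface element $T\in\mathcal T_h^I$, where $\nabla v^\pm$ are the constant gradients on $T^\pm$ and an edge $e$ may split into portions $e^+\subset T^+$, $e^-\subset T^-$. Here I would first extract from the jump conditions the comparability of the weighted gradients. Writing $p^s=\nabla v^s\cdot\bn_\Gamma$ and $q^s=\nabla v^s\cdot\bt_\Gamma$, Remark \ref{rmk:derivative} gives $q^+=q^-$ while the flux condition (\ref{def:basis-3}) gives $\beta^+p^+=\beta^-p^-$; a short computation then yields
\[
\beta^-|\nabla v^-|^2\le\alpha\,\beta^+|\nabla v^+|^2\quad\text{and}\quad\beta^+|\nabla v^+|^2\le\alpha\,\beta^-|\nabla v^-|^2,
\]
since $\beta^+/\beta^-\le\alpha$ and $\beta^-/\beta^+\le\alpha$. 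Let $T^L$ be the piece of larger area, so that shape regularity gives $|T^L|\ge\tfrac12|T|\ge c\,h^2$. On each edge portion $e^s$ the integrand is constant, hence $h\|(\sqrt\beta\,\nabla v\cdot\bn_e)|_{T^s}\|_{0,e^s}^2\le h|e^s|\,\beta^s|\nabla v^s|^2\le h^2\,\beta^s|\nabla v^s|^2$, and by comparability $\beta^s|\nabla v^s|^2\le\alpha\,\beta^L|\nabla v^L|^2$. Summing over the at most five edge portions and using $\|\sqrt\beta\,\nabla v\|_{0,T}^2\ge\beta^L|\nabla v^L|^2\,|T^L|\ge c\,h^2\,\beta^L|\nabla v^L|^2$, I obtain the elementwise bound with constant $C\alpha$. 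Combining both cases and summing over $T$ gives the claim with $C_1\alpha$.

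The step I expect to be the main obstacle is precisely the sliver case: the naive per-piece estimate $h\|\cdot\|_{0,e^s}^2\lesssim\|\cdot\|_{0,T^s}^2$ degenerates when $|T^s|\ll h^2$, and the remedy is to transfer the energy onto the large piece $T^L$ via the flux-jump comparability of $\beta^\pm|\nabla v^\pm|^2$; it is exactly this transfer that produces the factor $\alpha$ in the statement.
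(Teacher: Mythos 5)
Your proof is correct, but it follows a genuinely different route from the paper's. The paper's (very terse) argument decomposes $\nabla v_h=(\nabla v_h\cdot\bn_\Gamma)\bn_\Gamma+(\nabla v_h\cdot\bt_\Gamma)\bt_\Gamma=:\bw+\bz$ and then repeats the proof of (\ref{inter-edge}): by the flux condition (\ref{def:basis-3}) the vector $\beta\bw$ is continuous across $\overline{DE}$, and by Remark \ref{rmk:derivative} so is $\bz$; since both are piecewise constant and continuous they belong to $H^1(T)$ (indeed they are constant on all of $T$), so the trace inequality (\ref{trace-edge}) applies on the whole element and the factor $\alpha$ appears when trading $\beta\bw$ against $\sqrt\beta\,\bw$. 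You instead work componentwise on the two subelements: you convert the same two jump relations into the pointwise comparability $\beta^{\mp}|\nabla v^{\mp}|^2\le\alpha\,\beta^{\pm}|\nabla v^{\pm}|^2$, bound each edge portion by $h^2$ times the energy density on its side, and absorb everything into the larger subelement, whose area is $\gtrsim h^2$ by quasi-uniformity. Your version is more elementary (no trace theorem for $H^1$ is needed, only exact integration of constants) and it isolates cleanly both where $\alpha$ enters and why the sliver subelement causes no harm; its drawback is that the comparability step is special to piecewise constants satisfying the exact jump relations, so unlike the paper's decomposition it does not carry over to the companion interpolation estimate in Proposition \ref{approx-inter}. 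Two small points worth making explicit if you write this up: the reduction of the average to one-sided traces and the regrouping by elements should note that each edge portion of $\partial T$ is counted once, and the bound $|e^s|\le h$ uses that $e^s$ is contained in an edge of $T$.
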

\begin{proof} {
%Let $\bn_\Gamma, \bt_\Gamma$ be the unit normal, tangent vector respectively to the interface.
We decompose $\nabla v_h$ as
\begin{equation}
 \nabla v_h= (\nabla v_h\cdot\bn_\Gamma)\bn_\Gamma+ (\nabla v_h\cdot\bt_\Gamma) \bt_\Gamma :=\bw+\bz.
 \end{equation} The rest of the proof is almost the same as that of (\ref{inter-edge}).}
\end{proof}
\begin{proposition} \label{lem:Coer} There exists a positive constant $C_c$ independent of $v_h$ such that
for all  $v_h\in \hat S_h(\Omega)$ the following holds:
$$ a_\eps (v_h,v_h) \ge C_c \tnorm{v_h}^2. $$
\end{proposition}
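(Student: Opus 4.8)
The plan is to establish coercivity for the case $\eps=-1$ by testing $a_{-1}$ against the diagonal and absorbing the indefinite consistency term into the bulk energy and the penalty. First I would expand, using that for $\eps=-1$ the two halves of $b_\eps(v_h,v_h)$ coincide,
$$ a_{-1}(v_h,v_h)=\sum_{T\in\mathcal T_h}\|\sqrt\beta\nabla v_h\|_{0,T}^2-2\sum_{e\in\mathcal E_h^o}\int_e\{\beta\nabla v_h\cdot\bn_e\}_e[v_h]_e\,ds+\sum_{e\in\mathcal E_h^o}\frac{\sigma}{h}\|[v_h]_e\|_{0,e}^2. $$
Only the middle term has the wrong sign, so the entire argument reduces to bounding it below by a controllable fraction of the other two.

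For that middle term I would apply the Cauchy--Schwarz inequality on each edge followed by Young's inequality with a free parameter $\delta>0$:
$$ 2\left|\int_e\{\beta\nabla v_h\cdot\bn_e\}_e[v_h]_e\,ds\right|\le \delta h\|\{\beta\nabla v_h\cdot\bn_e\}_e\|_{0,e}^2+\frac{1}{\delta h}\|[v_h]_e\|_{0,e}^2. $$
Since $\beta$ is bounded, $\|\{\beta\nabla v_h\cdot\bn_e\}_e\|_{0,e}^2\le\beta_{max}\|\{\sqrt\beta\nabla v_h\cdot\bn_e\}_e\|_{0,e}^2$, and summing over $e$ Lemma~\ref{lem:deriv} gives $\sum_{e}h\|\{\sqrt\beta\nabla v_h\cdot\bn_e\}_e\|_{0,e}^2\le C_1\alpha\sum_T\|\sqrt\beta\nabla v_h\|_{0,T}^2$. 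Choosing $\delta$ so small that $\delta\beta_{max}C_1\alpha\le\tfrac12$ absorbs the gradient-average contribution into $\tfrac12\sum_T\|\sqrt\beta\nabla v_h\|_{0,T}^2$, and then choosing the penalty $\sigma>1/\delta$ leaves a strictly positive multiple of the jump term, so that $a_{-1}(v_h,v_h)\ge\tfrac12\sum_T\|\sqrt\beta\nabla v_h\|_{0,T}^2+c\sum_e h^{-1}\|[v_h]_e\|_{0,e}^2$ for some $c>0$.

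It remains to upgrade this lower bound to the full triple norm. The zeroth-order piece $\sum_T\|\sqrt\beta v_h\|_{0,T}^2$ is recovered from the discrete Poincar\'e inequality of Lemma~\ref{poin-sh}, since $\|v_h\|_{0,\Omega}^2\le C_p^{-1}|v_h|_{1,h}^2\le C_p^{-1}\beta_{min}^{-1}\sum_T\|\sqrt\beta\nabla v_h\|_{0,T}^2$; the gradient-average edge term of $\tnorm{\cdot}$ is controlled again by Lemma~\ref{lem:deriv}; and the weighted jump term, written as $[\sqrt\beta v_h]$, equals $\sqrt\beta\,[v_h]$ pointwise on each edge because the subdomain membership of a point on $e$ does not depend on which adjacent triangle it is viewed from, so $\|[\sqrt\beta v_h]_e\|_{0,e}^2$ and $\|[v_h]_e\|_{0,e}^2$ are equivalent up to the factors $\beta_{min},\beta_{max}$. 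Combining these three observations with the previous step and taking $C_c$ small enough yields $a_{-1}(v_h,v_h)\ge C_c\tnorm{v_h}^2$.

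The step I expect to be the main obstacle is the threshold on the penalty parameter $\sigma$: because the choice $\eps=-1$ makes the consistency term genuinely indefinite, coercivity holds only once $\sigma$ exceeds a constant determined by $C_1$, $\alpha$ and $\beta_{max}$, in contrast to the case $\eps=1$ where the term cancels on the diagonal and no such constraint arises. The delicate point is that the gradient-average term cannot be dominated directly by the bulk energy through a plain trace inequality, since $\nabla v_h$ fails to lie in $H^1(T)$ across the interface; it is precisely Lemma~\ref{lem:deriv}, with its normal/tangential splitting, that legitimizes the absorption, and tracking the $\alpha$-dependence there is what fixes the admissible range of $\delta$ and hence of $\sigma$.
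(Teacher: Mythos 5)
Your proposal is correct and follows essentially the same route as the paper's proof: Cauchy--Schwarz plus Young's inequality with a free parameter on the consistency term, Lemma~\ref{lem:deriv} to absorb the gradient averages into the bulk energy (and to recover the edge-average part of the triple norm), the discrete Poincar\'e inequality of Lemma~\ref{poin-sh} for the zeroth-order term, and a sufficiently large penalty $\sigma$ depending on $C_1$ and $\alpha$. Your extra remark that $[\sqrt\beta v_h]_e=\sqrt\beta\,[v_h]_e$ pointwise on each edge is a welcome clarification of a step the paper handles implicitly via the substitution $\sigma_0=\sigma/\beta$.
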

\begin{proof}
First of all, we consider $b_\eps(v_h,v_h)$, the second part of $a_\eps (v_h,v_h)$.
%$$ \sum_{e\in \mathcal{E}_h^o}\sum_{s=+,-} |\tilde{e}^s| \ \|\left\{\sqrt{\beta}\nabla v_h\cdot\bn\right\} \|^2_{0,e^s}\le
%C_1 \sum_{T\in \mathcal T_h} \|\sqrt{\beta}\nabla v_h\|_{0,T}^2 $$ holds.
%Let $\beta_1=\max(\beta^+,\beta^-)$ and  $\beta_0=\min(\beta^+,\beta^-)$.
By
Lemma \ref{lem:deriv}, Cauchy-Schwarz and arithmetic-geometric inequality, we have
\begin{eqnarray*}\nonumber
 &&\sum_{e\in \mathcal{E}_h^o} \int_{e}\left\{\beta\nabla v_h\cdot\bn\right\}[v_h] ds\\
  &\le& \left( \sum_{e\in \mathcal{E}_h^o} h \left\|\left\{\beta\nabla v_h \cdot\bn\right\}\right\|_{0,e}^2  \right)^{1/2}
  \left(\sum_{e\in\mathcal{E}_h^o} h^{-1}\left\|[v_h]\right\|_{0,e}^2\right)^{1/2}\\
&\le&  \left(C_1\alpha\sum_{T\in \mathcal T_h} \left\|\sqrt{\beta}\nabla v_h\right\|_{0,T}^2\right)^{1/2}\left(\sum_{e\in\mathcal{E}_h^o} h^{-1}\left\|[\sqrt{\beta}v_h]\right\|_{0,e}^2\right)^{1/2}\nonumber \\
&\le&  \frac{\gamma}{2} \left(\sum_{T\in \mathcal T_h} \left\|\sqrt{\beta}\nabla v_h\right\|_{0,T}^2\right) + \frac{C_1\alpha}{2\gamma}
 \left(\sum_{e\in\mathcal{E}_h^o} h^{-1}\left\|[\sqrt{\beta}v_h]\right\|_{0,e}^2\right)% \label{C-beta}
\end{eqnarray*}
for every $\gamma>0$.
 Hence by Lemma \ref{poin-sh}, we have
\begin{eqnarray*}
 \hspace{-15pt} a_\epsilon(v_h,v_h)
&=& a_h(v_h,v_h) + b_\eps(v_h,v_h) +j_\sigma(v_h,v_h) \\
&=& \sum_{T\in \mathcal T_h} \int_T\beta\nabla v_h \cdot \nabla v_h\,dx - 2\sum_{e\in \mathcal{E}_h^o} \int_{e}\left\{\beta\nabla v_h\cdot\bn\right\}[v_h] ds +\sum_{e\in \mathcal{E}_h^o} \int_{e}\frac \sigma{h} [v_h]^2ds  \\
%&\ge& \sum_{T\in \mathcal T_h} \beta(1-\gamma) |v|_{1,T} + \sum_{e\in \mathcal{E}_h^o} \left(\sigma-\frac{C_1\beta}{\gamma}\right)\frac{1}{h}\|[v]\|_{0,e}^2 \\
&\ge& \frac{C_p}2 \left\|\sqrt{\beta}v_h\right\|_{0,\Omega}^2+(\frac{1}{2}-\gamma ) \left|\sqrt{\beta} v_h\right|_{1,h}^2 + \left(\sigma_0-\frac{C_1\alpha}\gamma \right)\sum_{e\in \mathcal{E}_h^o} \frac{1}{h} \left\|[\sqrt{\beta} v_h]\right\|_{0,e}^2\\
&\ge& \frac{C_p}2 \left\|\sqrt{\beta}v_h\right\|_{0,\Omega}^2+(\frac{1}4-\gamma) \left|\sqrt{\beta} v_h\right|_{1,h}^2+ \frac{1}{4C_1\alpha} \sum_{e\in \mathcal{E}_h^o} h \left\|\left\{\sqrt{\beta}\nabla v_h\cdot\bn\right\} \right\|^2_{0,e}\\
&&+\left(\sigma_0-\frac{C_1\alpha}\gamma \right)\sum_{e\in \mathcal{E}_h^o} \frac{1}{h} \left\|[\sqrt{\beta}v_h]\right\|_{0,e}^2,
\end{eqnarray*}
where we have set $\sigma_0=\sigma/\beta$.
If we choose $\gamma=\frac{1}{8}$ and $\sigma_0$ large enough so that $\left(\sigma_0- 8 C_1 \alpha \right)\ge
 \frac{1}8. $
Then with  $C_c:=\min \left(\frac{C_p}2,\, \frac{1}8, \, \frac{1}{4C_1\alpha} \right) $,  we have
$$ a_\eps (v,v) \ge C_c \tnorm{v}^2.$$
\end{proof}
%
%\begin{eqnarray*}
% \hspace{-15pt} a_\epsilon(v,v)
%&=& a_h(v,v) + b_\eps(v,v) +j_\sigma(v,v) \\
%&=& \sum_{T\in \mathcal T_h} \int_T\beta\nabla v \cdot \nabla v\,dx - 2\sum_{e\in \mathcal{E}_h^o} \int_{e}\left\{\beta\nabla v\cdot\bn\right\}[v] ds +\sum_{e\in \mathcal{E}_h^o} \int_{e}\frac \sigma h [v]^2ds  \\
%&\ge& \sum_{T\in \mathcal T_h} \beta(1-\gamma) |v|_{1,T} + \sum_{e\in \mathcal{E}_h^o} \left(\sigma-\frac{C_1\beta}{\gamma}\right)\frac{1}{h}\|[v]\|_{0,e}^2 \\
%&\ge& \frac{\beta_0 C_p}2 \|v\|_{0,\Omega}^2+\beta(\frac{1}{2}-\gamma ) |v|_{1,h}^2 + \left(\sigma-\frac{C_1\beta_1}\gamma \right)\sum_{e\in \mathcal{E}_h^o} \frac{1}{h} \|[v]\|_{0,e}^2.
%\end{eqnarray*}
%Hence, by (\ref{trace_ineq}),
%\begin{eqnarray*}
%a_\epsilon(v,v)
%&\ge& \frac{\beta_0 C_p}2 \|v\|_{0,\Omega}^2+(\frac{\beta_0}4-\gamma ) |v|_{1,h}^2+ \frac{\beta_0}{4C_1} \sum_{e\in \mathcal{E}_h^o}h\|\left\{\nabla v\cdot\bn\right\} \|^2_{0,e}\\
%&&+\left(\sigma-\frac{C_1\beta_1}\gamma \right)\sum_{e\in \mathcal{E}_h^o} \frac{1}{h} \|[v]\|_{0,e}^2.
%\end{eqnarray*}
%If we choose $\gamma=\frac{\beta_0}{8 }$ and $\sigma$ large enough so that $\left(\sigma-\frac{C_1\beta_1}\gamma \right)\ge
% \frac{\beta_0}8. $
%Then with  $C_c:=\min \left(\frac{\beta_0 C_p}2,\, \frac{\beta_0}8, \, \frac{\beta_0}{4C_1}\right) $,  we have
%$$ a_\eps (v,v) \ge C_c \tnorm{v}^2.$$
%\end{proof}
  {
\begin{remark}
We can take any positive $\sigma$ when $\eps=1$, because $b_\eps(v,v)$ becomes zero.
If $\eps=0$ or $-1$, it seems that $\sigma>0$ must be large enough to show the coercivity.
However, small positive $\sigma$ or even $\sigma=0$ works for all the cases we have tested. This is in contrast to the usual DG schemes, where sufficiently large $\sigma$ is necessary.
The reason seems to be that, unlike the usual DG, the term $b_\eps(v,v)$ is small enough to be dominated by $a_h(v,v)$, since  the jump $[v]$  vanishes at the vertices of each $T\in \mathcal T_h$.
In fact, using the techniques in
\cite{CMAME-unp} and  the proof of Proposition  \ref{approx-inter}
 we can show $|b_\eps(v,v)|\le C (h|\log h|)^{1/2} \|v\|^2_{1,h}$, but the details are complicated. This will be shown in the subsequent paper.
\end{remark}}

\subsection{ $H^1$-error analysis} $\newline$
 First we check that the modified $P_1$-IFEM is consistent.
\begin{lemma}\label{lem:consistency} Let $u$ be the solution of (\ref{TheprimalEq})-(\ref{jump_condition}) and let
$u_h^m$ be the solution of (\ref{Modi-IFEM}). For any $ v_h \in \what{S}_h(\Omega)$, we have
\begin{equation}\label{Consisten-eps}
a_\eps (u,v_h)=(f,v_h).
\end{equation}
In other words,
$$ a_\eps (u-u_h^m,v_h) = 0. $$
\begin{proof}
By (\ref{DG_conti}), the definition of the $a_\eps $ form and the homogeneous jump condition of $u$, we have
\begin{equation*}
 a_\eps (u,v_h) - a_\eps (u_h^m,v_h) = a_\eps (u,v_h)-(f,v_h) = \sum_{e\in \mathcal{E}_h^o} \int_{e}\frac \sigma h [u]_{e}[v_h]_{e} ds = 0 .
\end{equation*}
\end{proof}
\end{lemma}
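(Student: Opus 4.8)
The plan is to establish the consistency identity \eqref{Consisten-eps} directly from the continuous variational identity \eqref{DG_conti}, which the exact solution $u$ satisfies for every test function that is piecewise $H^1$. The strategy rests on comparing the defining bilinear form $a_\eps(u,v_h)$ against the left-hand side of \eqref{DG_conti}, and then accounting for the discrepancy using the homogeneous jump condition $[u]_\Gamma=0$ together with the continuity of $u$ across interelement edges.

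First I would recall that $a_\eps(u,v_h)$ expands as $a_h(u,v_h)+b_\eps(u,v_h)+j_\sigma(u,v_h)$. The terms $a_h(u,v_h)$ and the two edge-integral contributions appearing in $b_\eps(u,v_h)$ match exactly the three terms on the left of \eqref{DG_conti} (for the case $\eps=-1$, the sign and the averaging are arranged precisely so that they coincide). Thus the only leftover piece is the penalty term $j_\sigma(u,v_h)=\sum_{e\in\mathcal{E}_h^o}\int_e \frac{\sigma}{h}[u]_e[v_h]_e\,ds$. Since $u\in H^1_0(\Omega)$ is globally continuous, its jump $[u]_e$ vanishes across every interior edge $e\in\mathcal{E}_h^o$, so this entire sum is zero. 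Combining these observations gives $a_\eps(u,v_h)=\int_\Omega f v_h\,dx=(f,v_h)$, which is precisely \eqref{Consisten-eps}.

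The second assertion then follows immediately: subtracting the scheme \eqref{Modi-IFEM}, which reads $a_\eps(u_h^m,v_h)=(f,v_h)$, from the just-established identity yields $a_\eps(u-u_h^m,v_h)=(f,v_h)-(f,v_h)=0$ for all $v_h\in\what{S}_h(\Omega)$, which is the desired Galerkin orthogonality. Note that this step uses linearity of $a_\eps$ in its first argument, which is clear from the definition.

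The subtlety to be careful about — rather than a genuine obstacle — is the regularity needed to make sense of \eqref{DG_conti} when applied to $u$. By Theorem \ref{thm:reg} the solution lies in $\wtilde{H}^2(\Omega)$, so its restriction to each $T$ is in $H^1(T)$ and the trace terms $\{\beta\nabla u\cdot\bn_e\}_e$ are well defined on every edge; moreover, because $\beta\nabla u\cdot\bn$ has no jump across $\Gamma$ by \eqref{jump_condition} and $u$ is continuous across interior edges, the averaged normal-flux term equals the single-valued flux, so \eqref{DG_conti} is indeed valid for $v=v_h$. Once this justification is in place, the proof is purely a matching of terms, and the vanishing of the penalty term via global continuity of $u$ is the only nontrivial input.
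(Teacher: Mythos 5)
Your proposal is correct and follows essentially the same route as the paper: both identify $a_h(u,v_h)+b_\eps(u,v_h)$ with the left-hand side of (\ref{DG_conti}) and observe that the only leftover term, the penalty $j_\sigma(u,v_h)$, vanishes because $[u]_e=0$ on every interior edge, after which Galerkin orthogonality follows by subtracting the discrete scheme. Your added justification that (\ref{DG_conti}) is legitimately applicable to $u\in\wtilde{H}^2(\Omega)$ is a sensible elaboration of what the paper leaves implicit, not a different argument.
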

Now we can prove the $H^1$-error estimate which is  optimal both in order and the regularity.
\begin{theorem}\label{Thm:H1} Let $u$ be the solution of (\ref{TheprimalEq})-(\ref{jump_condition}) and let
$u_h^m$ be the solution of (\ref{Modi-IFEM}). Then there exists a positive constant $C$ independent of $u$ and $h$ such that
 \begin{eqnarray*}
 \tnorm{u - u_h^m} &\le & C h\|u\|_{\widetilde{H}^2(\Omega)}.  \label{H1-error}
\end{eqnarray*}
\end{theorem}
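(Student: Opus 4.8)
The plan is to run the standard Strang/Céa argument for consistent nonconforming schemes, combining the three ingredients already assembled: the coercivity of Proposition~\ref{lem:Coer}, the boundedness $|a_\eps(u,v)|\le C_b\tnorm{u}\tnorm{v}$ on $H_h(\Omega)$, and the Galerkin orthogonality of Lemma~\ref{lem:consistency}. First I would split the error through the interpolant by writing $u-u_h^m=\eta-\xi$, where $\eta:=u-\hat I_h u$ and $\xi:=u_h^m-\hat I_h u$. The decisive feature of this splitting is that $\xi\in\what S_h(\Omega)$, so the coercivity estimate — which is available only on the discrete space — can be applied to it, whereas the full error $u-u_h^m$ does not lie in $\what S_h(\Omega)$.

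Next I would estimate $\tnorm{\xi}$. By coercivity, $C_c\tnorm{\xi}^2\le a_\eps(\xi,\xi)$. Since $\xi\in\what S_h(\Omega)$, Lemma~\ref{lem:consistency} gives $a_\eps(u-u_h^m,\xi)=0$, hence $a_\eps(u_h^m,\xi)=a_\eps(u,\xi)$; substituting the definition of $\xi$ then yields $a_\eps(\xi,\xi)=a_\eps(u_h^m-\hat I_h u,\xi)=a_\eps(u-\hat I_h u,\xi)=a_\eps(\eta,\xi)$. Applying boundedness, $a_\eps(\eta,\xi)\le C_b\tnorm{\eta}\tnorm{\xi}$, so after dividing by $\tnorm{\xi}$ I obtain $\tnorm{\xi}\le(C_b/C_c)\tnorm{\eta}$.

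Finally, the triangle inequality gives $\tnorm{u-u_h^m}\le\tnorm{\eta}+\tnorm{\xi}\le(1+C_b/C_c)\tnorm{\eta}$, and Proposition~\ref{approx-inter} bounds $\tnorm{\eta}=\tnorm{u-\hat I_h u}\le C_I h\|u\|_{\widetilde H^2(\Omega)}$, which closes the estimate with $C=(1+C_b/C_c)C_I$. In this argument no genuinely new computation is required: all the analytic difficulty has already been absorbed into the coercivity proof (the normal–tangential decomposition of $\nabla v_h$ in Lemma~\ref{lem:deriv}) and the interpolation estimate of Proposition~\ref{approx-inter}. The only subtlety to watch is the bookkeeping of which space each function inhabits — coercivity holds only on $\what S_h(\Omega)$, while boundedness must be invoked on $H_h(\Omega)$ to control the pairing $a_\eps(\eta,\xi)$ involving the nonconforming quantity $\eta$ — together with confirming that the jump and consistency terms in $a_\eps$ do not spoil the orthogonality, which is precisely what Lemma~\ref{lem:consistency} guarantees.
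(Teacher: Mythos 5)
Your argument is exactly the paper's proof: the same splitting through the interpolant $\hat I_h u$, coercivity applied to $\xi=u_h^m-\hat I_h u\in\what S_h(\Omega)$, the consistency identity of Lemma~\ref{lem:consistency} to replace $a_\eps(\xi,\xi)$ by $a_\eps(u-\hat I_h u,\xi)$, boundedness, the triangle inequality, and Proposition~\ref{approx-inter}, yielding the same constant $(1+C_b/C_c)C_I$. The proposal is correct and requires no changes.
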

\begin{proof}
By Proposition \ref{lem:Coer}, (\ref{Consisten-eps}) and boundedness of  $a_\eps (\cdot,\cdot)$ with respect to $\tnorm{\cdot}$, we have
\begin{eqnarray*}
\tnorm{u_h^m-\hat I_hu}^2 &\le &  C_c^{-1} a_\eps (u_h^m-\hat I_hu,u_h^m-\hat I_hu)\\
&= & C_c^{-1} a_\eps (u-\hat I_hu,u_h^m-\hat I_hu)\\
 &\le & C_c^{-1} C_b \tnorm{u-\hat I_h u} \tnorm {u_h^m-\hat I_h u}.
  \end{eqnarray*}
By the triangle inequality and Proposition \ref{approx-inter}, we get
\begin{eqnarray*}
\tnorm{u-u_h^m} &\le& \tnorm{u-\hat I_h u}+ \tnorm{u_h^m-\hat I_hu} \\
&\le& (C_c^{-1} C_b +1)C_I h \|u\|_{\widetilde{H}^2(\Omega)} .
\end{eqnarray*}
\end{proof}
\subsection{ $L^2$-error analysis}
%Now we have the following.
\begin{theorem}
For the solution $u_h^m$ of (\ref{Modi-IFEM}), there exists a positive constant $C$ independent of $u$ and $h$ such that
 \begin{eqnarray*}
 \|{u - u_h^m}\|_{L^2(\Omega)} &\le & C h^2\|u\|_{\widetilde{H}^2(\Omega)}.  \label{L2-error}
\end{eqnarray*}
\end{theorem}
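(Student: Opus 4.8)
The plan is to use a duality (Aubin--Nitsche) argument, taking advantage of the fact that for $\eps=-1$ the bilinear form $a_\eps(\cdot,\cdot)$ is symmetric. Write $e:=u-u_h^m$ and introduce the dual problem: find $w$ with $-\nabla\cdot(\beta\nabla w)=e$ in $\Omega^s$, $w=0$ on $\partial\Omega$, together with the jump conditions $[w]_\Gamma=0$ and $[\beta\,\pd wn]_\Gamma=0$. Since $e\in L^2(\Omega)$, Theorem~\ref{thm:reg} provides a unique $w\in\wtilde{H}^2_\Gamma(\Omega)$ satisfying $\|w\|_{\wtilde{H}^2(\Omega)}\le C\|e\|_{L^2(\Omega)}$; in particular $\hat I_h w$ is well defined and Proposition~\ref{approx-inter} applies to $w$.

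Next I would record an adjoint consistency identity. Because $w$ solves a problem of the same form (with datum $e$) and is continuous across every interior edge, the derivation that produced (\ref{DG_conti}) yields $a_\eps(w,v)=(e,v)$ for every $v\in H_h(\Omega)$: the vanishing of the jump of $w$ makes both the $\eps$-term of $b_\eps$ and the penalty form $j_\sigma$ disappear, so the identity persists even when the jump of $v$ does not vanish on the interface edges. Testing with $v=e$ then gives $\|e\|_{L^2(\Omega)}^2=(e,e)=a_\eps(w,e)$.

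Now I would invoke the structural feature of the case $\eps=-1$: the form $a_{-1}(\cdot,\cdot)$ is symmetric, since the two edge terms of $b_{-1}$ are symmetric in their arguments while $a_h$ and $j_\sigma$ are manifestly symmetric. Hence $a_{-1}(w,e)=a_{-1}(e,w)$, and the Galerkin orthogonality $a_\eps(e,v_h)=0$ from Lemma~\ref{lem:consistency}, used with $v_h=\hat I_h w$, gives $\|e\|_{L^2(\Omega)}^2=a_{-1}(e,w-\hat I_h w)$. Applying boundedness $|a_\eps(\cdot,\cdot)|\le C_b\tnorm{\cdot}\tnorm{\cdot}$, then the interpolation bound $\tnorm{w-\hat I_h w}\le C_I h\|w\|_{\wtilde{H}^2(\Omega)}$ of Proposition~\ref{approx-inter}, the $H^1$ estimate $\tnorm{e}\le Ch\|u\|_{\wtilde{H}^2(\Omega)}$ of Theorem~\ref{Thm:H1}, and finally the regularity bound $\|w\|_{\wtilde{H}^2(\Omega)}\le C\|e\|_{L^2(\Omega)}$, one chains these into $\|e\|_{L^2(\Omega)}^2\le Ch^2\|u\|_{\wtilde{H}^2(\Omega)}\|e\|_{L^2(\Omega)}$; dividing by $\|e\|_{L^2(\Omega)}$ yields the asserted estimate.

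The step I expect to be the main obstacle is the adjoint consistency identity: one must check that testing the dual relation against the nonconforming error $e\in H_h(\Omega)\setminus\hat S_h(\Omega)$ (whose jumps across interface edges need not vanish) still reproduces exactly $(e,e)$, which hinges on the continuity of $w$ removing the $\eps$- and $\sigma$-contributions. Together with the symmetry of $a_{-1}$, this is precisely what allows Galerkin orthogonality to convert $a_{-1}(w,e)$ into $a_{-1}(e,w-\hat I_h w)$; for a general $\eps$ one would instead have to pose a genuinely adjoint dual problem, which is why specializing to $\eps=-1$ keeps the presentation short.
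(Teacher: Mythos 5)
Your argument is correct and is essentially the paper's own duality (Aubin--Nitsche) proof: the same dual problem, the same adjoint-consistency identity obtained from (\ref{DG_conti}) because $[w]_e=0$ kills the $\eps$- and penalty terms, followed by Galerkin orthogonality, boundedness of $a_\eps$, the $H^1$ estimate and elliptic regularity. The only cosmetic difference is that you subtract the interpolant $\hat I_h w$ and invoke Proposition \ref{approx-inter}, whereas the paper subtracts the discrete dual solution $\Psi_h$ and reuses Theorem \ref{Thm:H1}; both yield $\tnorm{w-v_h}\le C h\|w\|_{\wtilde{H}^2(\Omega)}$, and the rest of the chain is identical.
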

\begin{proof}
Consider the dual equation:
\begin{eqnarray*}
-\nabla (\beta\nabla \Psi) &=& w  ~~\quad \mathrm{  in}~  \Omega^s \,\, (s=+,-) \\
\left[\Psi\right]_\Gamma &=&0, \\
\left[\,\beta(x)\pd \Psi n\,\right]_\Gamma&=&0, ~~  \\
\Psi &=& 0  ~~\quad \mathrm{ on}~  \partial\Omega.
\end{eqnarray*}
Then by Theorem \ref{thm:reg} the solution satisfies
\begin{equation}\label{rg-dual}
\|\Psi\|_{\tilde H^2(\Omega)} \le C \|w\|_{L^2(\Omega)}.
\end{equation}

Let $\Psi_h$ be the modified IFEM solution of this problem. Then with $e_h:=u- u_h^m$,
we have by Lemma \ref{lem:consistency}
 $$   (e_h,w)=a_\eps(e_h,\Psi)  = a_\eps(e_h,\Psi-\Psi_h). $$  Then by boundedness of $a_\eps$, Theorem \ref{Thm:H1} and (\ref{rg-dual})
\begin{equation*}
 |(e_h,w) | \le C \tnorm{e_h}\tnorm{\Psi-\Psi_h}\le C h\|\Psi\|_{\tilde H^2(\Omega)} \tnorm{e_h} \le Ch \|w\|_{L^2(\Omega)} \tnorm{e_h}.
\end{equation*}
 Taking $w=e_h$, we obtain
 $$\|u - u^m_h \|_{L^2(\Omega)} \le Ch \tnorm{u-u^m_h} \le C h^2 \|u\|_{\widetilde{H}^2(\Omega)}. $$
\end{proof}
\section{Numerical Experiments}
For  numerical tests, we solve the problem
(\ref{TheprimalEq})-(\ref{jump_condition}) on the rectangular domain $\Omega=[-1,1]\times[-1,1]$ partitioned into
unform right triangles with $h_x = h_y = 1/2^{n-1}$ for $n=4, \cdots ,10$.
Three types of interface problems are considered with various values of parameter $\beta$.
We measured $\|u-u_h\|_{0}$ and $\|u-u_h\|_{1,h}$ which are very close to the theoretical
orders of convergence, $2$ and $1$ respectively.   Although not reported, we also measured $\sum_e\|u-u_h\|_{0,e}$ and $\sum_e\|\partial (u-u_h)/\partial n\|_{0,e}$, the orders of which agree with the theoretical value $1.5$ and $0.5$ respectively.
Moreover, we observe the second order convergence in $L^\infty$ norm also.

 %with our new scheme but slight deterioration is shown for some problems with the original $P_1$-IFEM.
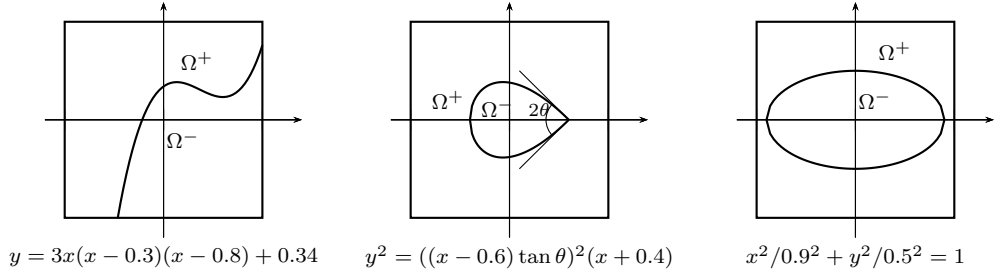
\begin{figure}[hbt]%%%% h=here, or ht, t, bottom
\begin{center}
\begin{pspicture}(-0,-1.8)(10,1.6)\footnotesize
%%%%
\psset{xunit=1.3, yunit=1.3}
%%%%%%%
\rput(0.5,0){ \psaxes[ticks=x, labels=none,linewidth=.5\pslinewidth]{->}(0,0)(-1.2,-1.2)(1.4,1.2)
\pspolygon(-1,-1)(1,-1)(1,1)(-1,1)
\psplot{-0.464}{1}{x 0.3 sub x mul 3 mul x 0.8 sub mul 0.34 add} %[fillstyle=solid,plotpoints=200]
 \rput(0.2,-0.2){$\Omega^-$}  \rput(0.34,0.6){$\Omega^+$}
\rput(0,-1.4){$y=3x(x-{0.3})(x-0.8)+0.34$}
}

\rput(4.0,0){
\psaxes[ticks=x, labels=none,linewidth=.5\pslinewidth]{->}(0,0)(-1.2,-1.2)(1.4,1.2)
\pspolygon(1,-1)(1,1)(-1,1)(-1,-1)(1,-1)

\rput(-0.62,0.2){$\Omega^+$} \rput(-0.12,0.12){$\Omega^-$}
\parametricplot{-.4}{.6}{t t  0.6 sub 2 exp  t 0.4 add mul   sqrt } % y=\sqrt{(x-1)^2x}
\parametricplot{-.4}{.6}{t t  0.6 sub 2 exp  t 0.4 add mul   sqrt -1 mul }
\rput(0.6,0){
\psline[linewidth=.5\pslinewidth](-0.5,0.5)(0,0)(-0.5,-0.5)
\psarc[linewidth=.5\pslinewidth]{-}(0,0){0.3} {135}{225}
\rput(-0.3,0.1){\scriptsize$2\theta$}
}
\rput(0.1,-1.4){$y^2 =((x-0.6)\tan\theta )^2(x+0.4)$}
 }

\rput(7.5,0){\psaxes[ticks=x, labels=none,linewidth=.5\pslinewidth]{->}(0,0)(-1.2,-1.2)(1.4,1.2)
\pspolygon(-1,-1)(1,-1)(1,1)(-1,1)
\psplot{-0.9}{0.9}{ 1 x .9 div 2 exp sub sqrt .5 mul} %[fillstyle=solid,plotpoints=200]
\psplot{-0.9}{0.9}{ 1 x .9 div 2 exp sub sqrt -.5 mul}
 \rput(0.2,0.2){$\Omega^-$}  \rput(0.4,0.7){$\Omega^+$}
\rput(0.,-1.4){$x^2/0.9^2+y^2/0.5^2=1$}
}
\end{pspicture}
\caption{Interfaces of Examples 1,2 and 3}\label{figure1.24}

\end{center}
\end{figure}
\begin{example}  [Cubic curve] \label{example:cubic}
  {The 0-set of function $L(x,y)=y-3x(x-0.3)(x-0.8)-0.34$ is used in this example as the interface.}
%We take the level set of $L(x,y)=y-3x(x-0.3)(x-0.8)-0.34$ as an  interface.
The exact solution is
$u=L(x,y)/\beta$, where $\beta=\beta^\pm$ on  $\Omega^\pm$. We test the cases when $ \beta^+/\beta^-=10$ and $1000$.
\end{example}
%\begin{figure}[htb]
%\centerline{ \includegraphics[width=8cm]{cubic_curve_level6}}
%\caption{Solution $u_h$ for Example \ref{example:cubic} ($\beta^- = 1$, $\beta^+ = 10$, $   {1/h_x = 32}$)}
%\end{figure}
%\begin{figure}
%    \centering
%    \begin{subfigure}[b]{0.45\textwidth}
%        \includegraphics[width=6.0cm]{Cubic_original_}
%        \caption{$P_1$-IFEM}
%    \end{subfigure}
%    \begin{subfigure}[b]{0.45\textwidth}
%        \includegraphics[width=6.0cm]{Cubic_modified}
%        \caption{Modified $P_1$-IFEM}
%    \end{subfigure}
%    \begin{adjustwidth}{1.0cm}{}
%    \caption[c]
%      {\begin{minipage}[t]{.8\linewidth}Error Surface for Example \ref{example:cubic} (Cubic Curve) \\( {$\beta^-=1$, $\beta^+=10$, $1/h_x = 128$}) \end{minipage}}\label{fig_cubic_surf}
%      %{\small Error Surface for Example \ref{example:cubic}(  {$\beta^-=1$, $\beta^+=10$, $1/h_x = 128$})}
%      \end{adjustwidth}
%
%\end{figure}

\begin{table}
\small
\begin{tabular}{|c|c||c|c||c|c||c|c|}
\hline   & $1/h_x$& \small{$\|u-u_h\|_{0}$} & order & \small{$\|u-u_h\|_{1,h}$} & order &\small{$\|u-u_h\|_{\infty}$}& order\\
\cline{2-8}
          &   8  & 1.344e-2 &       & 3.315e-1 &       &2.761e-2&	      	\\ %\cline{2-8}
          &  16  & 3.453e-3 & 1.961 & 1.709e-1 & 0.955 &8.715e-3&	1.663 \\ %\cline{2-8}
\multirow{2}{*}{$P_1$-IFEM} &  32   & 8.9002-4 & 1.956 &8.727e-3&   0.970 &3.069e-3&	1.506 \\ %\cline{2-8}
          &  64  & 2.161e-4 & 2.043 & 4.507e-2 & 0.953 &1.295e-3&	1.245  \\ %\cline{2-8}
          & 128  & 5.541e-5 & 1.963 & 2.347e-2 & 0.941 &5.786e-4&	1.162  \\ %\cline{2-8}
          & 256  & 1.851e-5 & 1.582 & 1.288e-2 & 0.865 &3.598e-4&	0.686  \\ %\cline{2-8}
          & 512  & 8.193e-6 & 1.176 & 7.297e-3 & 0.820 &1.776e-4&	1.018  \\ %\cline{2-8}
\hline  \hline & $1/h_x$& \small{$\hspace{2pt}\|u-u_h^m\|_{0}\hspace{1pt}$} & order & \small{$\hspace{1pt}\|u-u_h^m\|_{1,h}$} & order & \small{$\|u-u_h^m\|_{\infty}$} & order \\ \cline{2-8}
            &   8  & 1.233e-2 &       & 3.306e-1 &       &2.345e-2&	      	\\ %\cline{2-8}
            &  16  & 3.260e-3 & 1.919 & 1.694e-1 & 0.965 &6.765e-3&	1.793 	 \\ %\cline{2-8}
Modified    &  32  & 8.269e-4 & 1.979 & 8.554e-2 & 0.986 &1.775e-3&	1.931 	 \\ %\cline{2-8}
$P_1$-IFEM  &  64  & 2.094e-4 & 1.982 & 4.300e-2 & 0.992 &4.621e-4&	1.941 	 \\ %\cline{2-8}
            & 128  & 5.286e-5 & 1.986 & 2.156e-2 & 0.996 &1.185e-4&	1.964 	 \\ %\cline{2-8}
            & 256  & 1.328e-5 & 1.993 & 1.078e-2 & 0.999 &2.991e-5&	1.986 	 \\ %\cline{2-8}
            & 512  & 3.308e-6 & 2.005 & 5.399e-3 & 0.998 &7.557e-6&	1.985 	 \\ %\cline{2-8}
\hline
\end{tabular}
\begin{center}
\caption{Example \ref{example:cubic} (Cubic curve): $\beta^-=1, ~\beta^+=10$} \label{table:third_10}
\end{center}
\end{table}
\begin{table}
\small
\begin{tabular}{|c|c||c|c||c|c||c|c|}
\hline   & $1/h_x$& \small{$\|u-u_h\|_{0}$} & order & \small{$\|u-u_h\|_{1,h}$} & order &\small{$\|u-u_h\|_{\infty}$}& order\\
\cline{2-8}
          &   8  & 1.923e-2 &       & 3.530e-1 &       & 5.617e-2&	      	\\ %\cline{2-8}
          &  16  & 4.002e-3 & 2.264 & 1.716e-1 & 1.040 & 1.470e-2&	1.934   \\ %\cline{2-8}
\multirow{2}{*}{$P_1$-IFEM}
          &  32  & 9.196e-4 & 2.122 & 8.453e-2 & 1.022 & 3.854e-3&	1.932  \\ %\cline{2-8}
          &  64  & 2.291e-4 & 2.005 & 4.221e-2 & 1.002 & 1.288e-3&	1.582  \\ %\cline{2-8}
          & 128  & 5.408e-5 & 2.083 & 2.105e-2 & 1.004 & 2.836e-4&	2.183  \\ %\cline{2-8}
          & 256  & 1.337e-5 & 2.016 & 1.056e-2 & 0.995 & 1.159e-4&	1.291  \\ %\cline{2-8}
          & 512  & 3.336e-6 & 2.002 & 5.304e-3 & 0.994 & 5.258e-5&	1.141  \\ % \cline{2-8}
     %     & Order&          & 2.072 &          & 1.008 &          & 1.743 \\
\hline  \hline & $1/h_x$& \small{$\hspace{2pt}\|u-u_h^m\|_{0}\hspace{1pt}$} & order & \small{$\hspace{1pt}\|u-u_h^m\|_{1,h}$} & order & \small{$\|u-u_h^m\|_{\infty}$} & order \\ \cline{2-8}
          &   8  & 1.266e-2 &       & 3.216e-1 &       & 2.470e-2&	        \\ %\cline{2-8}
          &  16  & 3.205e-3 & 1.982 & 1.643e-1 & 0.969 & 6.836e-3&	1.854    \\ %\cline{2-8}
Modified  &  32  & 8.163e-4 & 1.973 & 8.293e-2 & 0.986 & 1.784e-3&	1.938    \\ %\cline{2-8}
$P_1$-IFEM&  64  & 2.068e-4 & 1.981 & 4.172e-2 & 0.991 & 4.642e-4&	1.943    \\ %\cline{2-8}
          & 128  & 5.199e-5 & 1.992 & 2.093e-2 & 0.996 & 1.185e-4&	1.970    \\ %\cline{2-8}
          & 256  & 1.302e-5 & 1.998 & 1.048e-2 & 0.998 & 3.009e-5&	1.977    \\ %\cline{2-8}
          & 512  & 3.259e-6 & 1.998 & 5.243e-3 & 0.999 & 7.564e-6&	1.992    \\ % \cline{2-8}
 %         & Order&          & 1.987 &          & 0.991 &          & 2.036 \\
\hline
\end{tabular}
\begin{center}
\caption{Example \ref{example:cubic} (Cubic curve) : $\beta^-=1, ~\beta^+=1000$} \label{table:third_1000}
\end{center}
\end{table}

%From Tables \ref{table:third_10} and \ref{table:third_1000}, we see
%the modified method performs better than the conventional  $P_1$-IFEM when $ \beta^+/\beta^-=10$.

The comparison with error surfaces in Figure \ref{fig_cubic_surf}. shows that modified method gives much more  accurate results
than the original $P_1$-IFEM when $ \beta^+/\beta^-=10$ and $1/h_x = 128$.
The smaller the mesh, the more accurate results the modified method shows.

Table \ref{table:third_10} shows the comparison of errors between the two methods when $ \beta^+/\beta^-=10$.
We can see the original $P_1$-IFEM has suboptimal convergence as the  grids are refined ($1/h_x = 256$ and $512$).
However, the modified method shows a robust order of convergence for all grids.

On the other hands, Table \ref{table:third_1000} shows both methods has an optimal convergence in $L^2$ and $H^1$ norms when $ \beta^+/\beta^-=1000$.

\begin{remark}
  {
Comparing Tables 1 and 2, we see the original $P_1$-IFEM behaves better when $\beta^+/\beta^-=1000$ than $ \beta^+/\beta^-=10$.
 This is a common phenomenon for all the examples we tested. This seems to contradict the usual behavior of standard FEM. We guess the reason is that the large ratio between the
coefficients masks the discontinuity of basis functions.
 Figure \ref{comparison_P1_basis} shows the behavior of $P_1$-IFEM basis between $ \beta^+/\beta^-=10$ and $ \beta^+/\beta^-=1000$. When $ \beta^+/\beta^-=10$, the gap between adjacent elements is conspicuous. However, when $ \beta^+/\beta^-=1000$, the gap is almost invisible.}
\end{remark}

%\begin{figure}
%    \centering
%    \begin{subfigure}[b]{0.45\textwidth}
%        \includegraphics[width=6.0cm]{P1_IFEM_basis_1vs10}
%        \caption{$ \beta^+/\beta^-=10$}
%    \end{subfigure}
%    \begin{subfigure}[b]{0.45\textwidth}
%        \includegraphics[width=6.0cm]{P1_IFEM_basis_1vs1000}
%        \caption{$\beta^+/\beta^-=1000$}
%    \end{subfigure}
%\caption{Comparison of $P_1$-IFEM basis with different $\beta^+/\beta^-$}  \label{comparison_P1_basis}
%\end{figure}

\begin{example} [Sharp corner] \label{example:sharp}
In this example, we consider an interface with a sharp corner having interior angle $2\theta$.
Let  $\Gamma$ be the zero set of $L(x,y) = -y^2 + ((x-0.6) \tan \theta )^2(x+0.4)$ for $x\le 0.6.$
We test the case with  $\theta=45^\circ$  and  $ \beta^+/\beta^-=10$ and $1/10$.
The exact solution is $u = {L(x,y)}/{\beta} $.
\end{example}

%\begin{figure}[hbt]
%\centerline{ \includegraphics[width=8cm]{Sharp_corner_45degree_Level6}}
%\begin{adjustwidth}{0.8cm}{}
%\caption[c]
%{\begin{minipage}[t]{.8\linewidth}Solution $u_h$ for Example \ref{example:sharp} (Sharp Corner) \\($\beta^-=1$, $\beta^+=10$, $\theta = 45^{\circ}$, $   {1/h_x = 32}$) \end{minipage}}\label{fig_sol_sharp}
%\end{adjustwidth}
%\end{figure}

%\begin{figure}
%\begin{center}
%    \centering
%    \begin{subfigure}[b]{0.45\textwidth}
%        \includegraphics[width=6.0cm]{Sharp_corner_45degree_10_1_original}
%        \caption{$P_1$-IFEM}
%    \end{subfigure}
%    \begin{subfigure}[b]{0.45\textwidth}
%        \includegraphics[width=6.0cm]{Sharp_corner_45degree_10_1_modified}
%        \caption{Modified $P_1$-IFEM}
%    \end{subfigure}
%    \begin{adjustwidth}{0.8cm}{}
%    \caption[c]
%      {\begin{minipage}[t]{.8\linewidth}Error Surfaces for Example \ref{example:sharp} (sharp corner) \\(  {$\beta^-=1$, $\beta^+=10$, $\theta = 45^{\circ}$, $1/h_x = 128$}) \end{minipage}}
%      \end{adjustwidth}
%%{ \small Error Surfaces for Example \ref{example:sharp} (  {$\beta^-=1, \beta^+=10, \theta = 45^{\circ}$}) }
%\end{center}
%\end{figure}
\begin{table}
\small
\begin{tabular}{|c|c||c|c||c|c||c|c|}
\hline   & $1/h_x$& \small{$\|u-u_h\|_{0}$} & order & \small{$\|u-u_h\|_{1,h}$} & order &\small{$\|u-u_h\|_{\infty}$}& order\\
\cline{2-8}
          &   8  & 3.359e-3 &       & 7.958e-2 &       & 1.036e-2&	       	 \\ %\cline{2-8}
          &  16  & 9.014e-4 & 1.898 & 4.185e-3 & 0.927 & 4.118e-3&	1.332    \\ %\cline{2-8}
\multirow{2}{*}{$P_1$-IFEM}
          &  32  & 2.219e-4 & 2.022 & 2.161e-3 & 0.954 & 1.958e-3&	1.073    \\ %\cline{2-8}
          &  64  & 5.686e-5 & 1.965 & 1.197e-3 & 0.852 & 9.568e-4&	1.033    \\ %\cline{2-8}
          & 128  & 1.463e-5 & 1.958 & 6.573e-3 & 0.865 & 5.063e-4&	0.918    \\ %\cline{2-8}
          & 256  & 6.070e-6 & 1.269 & 3.967e-3 & 0.728 & 2.462e-4&	1.040    \\ %\cline{2-8}
          & 512  & 2.942e-6 & 1.045 & 2.439e-3 & 0.702 & 1.241e-4&	0.988    \\ %\cline{2-8}
     %     & Order&          & 1.744 &          & 0.843 &          & 1.090 \\
\hline  \hline & $1/h_x$& \small{$\hspace{2pt}\|u-u_h^m\|_{0}\hspace{1pt}$} & order & \small{$\hspace{1pt}\|u-u_h^m\|_{1,h}$} & order & \small{$\|u-u_h^m\|_{\infty}$} & order \\ \cline{2-8}
          &   8  & 3.056e-3 &       & 7.817e-2 &       & 9.005e-3&	       	\\ %\cline{2-8}
          &  16  & 7.441e-4 & 2.038 & 3.956e-2 & 0.983 & 2.316e-3&	1.959   \\ %\cline{2-8}
Modified  &  32  & 1.930e-4 & 1.947 & 1.990e-2 & 0.991 & 6.221e-4&	1.896   \\ %\cline{2-8}
$P_1$-IFEM&  64  & 4.716e-5 & 2.033 & 1.000e-2 & 0.993 & 1.608e-4&	1.952   \\ %\cline{2-8}
          & 128  & 1.216e-5 & 1.956 & 5.015e-3 & 0.996 & 4.090e-5&	1.975   \\ %\cline{2-8}
          & 256  & 3.010e-6 & 2.014 & 2.510e-3 & 0.999 & 1.031e-5&	1.989   \\ %\cline{2-8}
          & 512  & 7.621e-7 & 1.982 & 1.256e-3 & 0.999 & 2.633e-6&	1.968   \\ %\cline{2-8}
        %  & Order&          & 1.993 &          & 0.994 &          & 1.935 \\
\hline
\end{tabular}
\begin{center}
\caption{ Example \ref{example:sharp} (Sharp corner) : $\theta= 45^{\circ}, ~ \beta^-=1, ~\beta^+=10$} \label{table:sharp_edge45_10}
\end{center}
\end{table}

\begin{table}
\small
\begin{tabular}{|c|c||c|c||c|c||c|c|}
\hline   & $1/h_x$& \small{$\|u-u_h\|_{0}$} & order & \small{$\|u-u_h\|_{1,h}$} & order &\small{$\|u-u_h\|_{\infty}$}& order \\
\cline{2-8}
          &   8  & 1.238e-2 &       & 3.013e-1 &       & 1.613e-2&	       \\ %\cline{2-8}
          &  16  & 3.159e-3 & 1.971 & 1.513e-1 & 0.994 & 4.327e-3&	1.899  \\ %\cline{2-8}
\multirow{2}{*}{$P_1$-IFEM}
          &  32  & 7.949e-4 & 1.991 & 7.572e-2 & 0.998 & 1.174e-3&	1.882  \\ %\cline{2-8}
          &  64  & 2.030e-4 & 1.969 & 3.821e-2 & 0.987 & 7.475e-4&	0.651  \\ %\cline{2-8}
          & 128  & 5.366e-5 & 1.920 & 1.933e-2 & 0.983 & 4.704e-4&	0.668  \\ %\cline{2-8}
          & 256  & 1.528e-5 & 1.812 & 9.919e-3 & 0.963 & 2.452e-4&	0.940  \\ %\cline{2-8}
          & 512  & 4.898e-6 & 1.642 & 5.155e-3 & 0.944 & 1.199e-4&	1.033  \\ %\cline{2-8}
     %     & Order&          & 1.744 &          & 0.843 &          & 1.090 \\
\hline  \hline & $1/h_x$& \small{$\hspace{2pt}\|u-u_h^m\|_{0}\hspace{1pt}$} & order & \small{$\hspace{1pt}\|u-u_h^m\|_{1,h}$} & order & \small{$\|u-u_h^m\|_{\infty}$} & order \\ \cline{2-8}
          &   8  & 1.238e-2 &       & 3.010e-1 &       & 1.610e-2&	       \\ %\cline{2-8}
          &  16  & 3.094e-3 & 2.000 & 1.507e-1 & 0.998 & 4.107e-3&	1.971  \\ %\cline{2-8}
Modified  &  32  & 7.787e-4 & 1.990 & 7.543e-2 & 0.999 & 1.037e-3&	1.986  \\ %\cline{2-8}
$P_1$-IFEM&  64  & 1.947e-4 & 2.000 & 3.773e-2 & 0.999 & 2.605e-4&	1.993  \\ %\cline{2-8}
          & 128  & 4.876e-5 & 1.998 & 1.887e-2 & 1.000 & 6.528e-5&	1.997  \\ %\cline{2-8}
          & 256  & 1.219e-5 & 2.000 & 9.435e-3 & 1.000 & 1.634e-5&	1.998  \\ %\cline{2-8}
          & 512  & 3.051e-6 & 1.998 & 4.718e-3 & 1.000 & 4.087e-6&	1.999  \\ %\cline{2-8}
        %  & Order&          & 1.993 &          & 0.994 &          & 1.935 \\
\hline
\end{tabular}
\begin{center}
\caption{ Example \ref{example:sharp} (Sharp corner) : $\theta= 45^{\circ}, ~ \beta^-=10, ~\beta^+=1$} \label{table:sharp_edge45_01}
\end{center}
\end{table}

This example is not covered by analysis of this work because the problem has low regularity at the interface corner.
However, we see that the modified method works better; See the Table \ref{table:sharp_edge45_10}.

\begin{remark}
 We have also computed other cases such as $\beta^+/\beta^-=1000, 0.001$ with various angles.
 The results of our scheme are always optimal while the unmodified $P_1$ immersed method deteriorates for some cases.
\end{remark}

\begin{example}[Variable coefficient] \label{example:variable}
Finally, we consider the case with variable coefficient.
  {The 0-set of function $L(x,y) = x^2/(0.9)^2+y^2/(0.5)^2-1.0$ is used in this example as the interface.}
%We take the level set of $L(x,y) = x^2/(0.9)^2+y^2/(0.5)^2-1.0$ as an  interface.
The exact solution is
$u=L(x,y)/\beta(x,y)$ where
\begin{eqnarray*}
 \beta(x,y) = \left\{%
\begin{array}{ll}
      (x^2+y^2-1)^2  & \textrm{on $\Omega^-$,} \\
      1 & \textrm{on $\Omega^+.$} \\
\end{array}%
\right.
\end{eqnarray*}
In this case, both methods show an optimal order of convergence in $H^1$-norm.  But the modified method
performs much better in $L^\infty$-norm; See Table \ref{table:ellipse_variable_coeff}.
\end{example}

%\begin{figure}[htb]
%\centerline{ \includegraphics[width=8cm]{variable_coeff_level6}}
%\caption{Solution $u_h$ for Example \ref{example:variable} (Variable coefficient)}
%\end{figure}

%\begin{figure}[htb]
%    \begin{subfigure}[b]{0.45\textwidth}
%        \includegraphics[width=6.0cm]{Ellipse_original}
%        \caption{$P_1$-IFEM}
%    \end{subfigure}
%    \begin{subfigure}[b]{0.45\textwidth}
%        \includegraphics[width=6.0cm]{Ellipse_modified}
%        \caption{Modified $P_1$-IFEM}
%    \end{subfigure}
%\begin{adjustwidth}{0.0cm}{}
%\caption[c]
%{\begin{minipage}[t]{.8\linewidth}Error Surface for Example \ref{example:variable} (Variable coefficient) \\
%  {($ \beta^- = (x^2+y^2-1)^2$, $\beta^+ = 1$, $1/h_x = 128$)}
%\end{minipage}}\label{fig_err_surf_variable}
%\end{adjustwidth}
%\end{figure}

\begin{table}
\small
\begin{tabular}{|c|c||c|c||c|c||c|c|}
\hline   & $1/h_x$& \small{$\|u-u_h\|_{0}$} & order & \small{$\|u-u_h\|_{1,h}$} & order &\small{$\|u-u_h\|_{\infty}$}& order\\
\cline{2-8}
          &   8  & 8.550e-2 &       & 1.585e-0 &       & 2.415e-1&	        \\ %\cline{2-8}
          &  16  & 2.931e-2 & 1.544 & 9.840e-1 & 0.688 & 1.025e-1&	1.237   \\ %\cline{2-8}
\multirow{2}{*}{$P_1$-IFEM}
          &  32  & 7.954e-3 & 1.882 & 5.538e-1 & 0.829 & 4.174e-2&	1.295   \\ %\cline{2-8}
          &  64  & 2.002e-3 & 1.990 & 3.033e-1 & 0.869 & 1.568e-2&	1.413   \\ %\cline{2-8}
          & 128  & 4.825e-4 & 2.053 & 1.665e-1 & 0.865 & 8.471e-3&	0.888   \\ %\cline{2-8}
          & 256  & 1.206e-4 & 2.000 & 8.948e-2 & 0.896 & 4.393e-3&	0.947   \\ %\cline{2-8}
          & 512  & 3.461e-5 & 1.802 & 5.063e-2 & 0.822 & 2.132e-3&	1.043   \\ %\cline{2-8}
       %   & Order&          & 1.918 &          & 0.841 &          & 1.295 \\
\hline  \hline & $1/h_x$& \small{$\hspace{2pt}\|u-u_h^m\|_{0}\hspace{1pt}$} & order & \small{$\hspace{1pt}\|u-u_h^m\|_{1,h}$} & order & \small{$\|u-u_h^m\|_{\infty}$} & order \\ \cline{2-8}
          &   8  & 8.652e-2 &       & 1.572e-0 &       & 2.150e-1&	        \\ %\cline{2-8}
          &  16  & 2.867e-2 & 1.593 & 9.704e-1 & 0.696 & 9.448e-2&	1.187   \\ %\cline{2-8}
Modified  &  32  & 8.049e-3 & 1.833 & 5.368e-1 & 0.854 & 3.656e-2&	1.370   \\ %\cline{2-8}
$P_1$-IFEM&  64  & 2.195e-3 & 1.874 & 2.889e-1 & 0.894 & 1.097e-2&	1.736   \\ %\cline{2-8}
          & 128  & 5.585e-4 & 1.975 & 1.485e-1 & 0.959 & 3.055e-3&	1.845   \\ %\cline{2-8}
          & 256  & 1.437e-4 & 1.958 & 7.550e-2 & 0.976 & 8.386e-4&	1.865   \\ %\cline{2-8}
          & 512  & 3.649e-5 & 1.978 & 3.809e-2 & 0.987 & 2.209e-4&	1.925   \\ %\cline{2-8}
       %   & Order&          & 1.884 &          & 0.904 &          & 1.884 \\
\hline
\end{tabular}
\begin{center}
\caption{ Example \ref{example:variable} (Variable coefficient)} \label{table:ellipse_variable_coeff}
\end{center}
\end{table}

\section{Conclusion}
   {
We introduced a modified IFEM for solving elliptic interface problems.
By adding the line integral terms similar to the DG methods, we overcome the suboptimal behavior of the original IFEM
proposed in \cite{Li2003}, \cite{Li2004}.
(The computational result there seemed to show optimal order. However, more
numerical experiments show the original $P_1$-IFEM is not optimal for some problems. The proof in
\cite{Ch-Kw-We} seems incorrect. However, our modified scheme is always robust for all problems tested including unreported ones).
  The optimal convergence rates in $H^1$ and $L^2$ norms are shown by a similar technique as in DG methods; the modified IFEM is  consistent, the coercivity and boundedness of the bilinear form hold.
  Several numerical tests show the errors are $O(h)$, $O(h^2)$ order in respective norms.
Although no proof is given, we also obtain $O(h^2)$ order in $L^\infty$ norm.
}

  { Some of the limitation of our scheme might be these: for problems with  sharp interface, one has
to arrange the grids so that  the cusp point is located at a vertex of an element,
 for problems with highly oscillating interface, further refinement are necessary to apply the IFEM.}

  { We  now comment on the  computational aspects:  The matrix structure are exactly the same as usual $P_1$-FEM, i.e., 5-point stencil; the number of unknowns are also the same. When $\eps=-1$, the scheme becomes symmetric. The assembly of stiffness matrix requires slightly more time than the unmodified $P_1$-IFEM, but the time for iterative solver such as conjugate gradient is almost the same.}

  {An obvious advantage of (both) IFEM is that we can use fast solver such as multigrid methods since we can use uniform meshes.}

%IFEM can be applied to any regular FEM meshes as long as the interface is smooth.

   {
 Future works related to this topic are:
 \begin{enumerate} \item Local refinement near singularity.
\item Problems with nonhomogenous jumps, tensor coefficients, etc.
\item $Q_1$-IFEM for rectangular elements.
\item 3-dimensional problems.
\item Problems with a moving interface.
\item  Two phase Stokes/Navier-Stokes problems.
\item Development of fast solver such as multigrid methods.
\end{enumerate}
}

\end{document}